\numberwithin{equation}{section}
\newcommand{\PP}{\mathbb{P}}
\newcommand{\C}{\mathbb{C}}
\newcommand{\zz}{\mathbb{Z}}
\newcommand{\Hom}{\mathrm{Hom}}
\newcommand{\End}{\mathrm{End}}
\newcommand{\Ker}{\mathrm{Ker}}
\newcommand{\Image}{\mathrm{Im}}
\newcommand{\Nm}{\mathrm{Nm}}
\newcommand{\rank}{\mathrm{rk}\,}
\newcommand{\Sym}{\mathrm{Sym}}
\newcommand{\Ox}{O_{X}}
\newcommand{\Kx}{K_{X}}
\newcommand{\Sec}{\mathrm{Sec}}
\newcommand{\Supp}{\mathrm{Supp}}
\newcommand{\mult}{\mathrm{mult}}
\newcommand{\Pic}{\mathrm{Pic}}
\def\map#1{\ \smash{\mathop{\longrightarrow}\limits^{#1}}\ }
\newcommand{\lra}{\longrightarrow}
\newcommand{\cD}{\mathcal{D}}
\newcommand{\Id}{\mathrm{Id}}
\newcommand{\Gr}{\mathrm{Gr}}
\newtheorem{theorem}{{\textbf Theorem}}[section]
\newtheorem{proposition}[theorem]{{\textbf Proposition}}
\newtheorem{corollary}[theorem]{{\textbf Corollary}}
\newtheorem{lemma}[theorem]{{\textbf Lemma}}
\newtheorem{criterion}[theorem]{{\textbf Criterion}}
\newtheorem{remit}[theorem]{{\textbf Remark}}
\newenvironment{remark}{\begin{remit}\rm}{\end{remit}}
\title[Theta divisors of stable bundles may be nonreduced]{Theta divisors of stable vector bundles may be nonreduced}
\author{George H.\ Hitching\\With an appendix by Christian Pauly}
\address{H\o gskolen i Oslo og Akershus, Postboks 4, 0130 Oslo, Norway. Tel.: +47 22 45 21 46}
\email{george.hitching@hioa.no}
\address{Laboratoire J.\ A. Dieudonn\'e, UMR CNRS 7351, Universit\'e de Nice Sophia-Antipolis, Parc Valrose, 06108 Nice Cedex 02, France. Tel.: +33 (0)4 9207 6202}
\email{pauly@math.unice.fr}
\subjclass[2010]{14H60, 14H40}
\keywords{Vector bundle, curve, theta divisor}
\begin{document}

\begin{abstract}
A generic strictly semistable bundle of degree zero over a curve $X$ has a reducible theta divisor, given by the sum of the theta divisors of the stable summands of the associated graded bundle. The converse is not true: Beauville and Raynaud have each constructed stable bundles with reducible theta divisors. For $X$ of genus $g \geq 5$, we construct stable vector bundles over $X$ of rank $r$ for all $r \geq 5$ with reducible and nonreduced theta divisors. We also adapt the construction to symplectic bundles.

In the appendix, Raynaud's original example of a stable rank 2 vector bundle with reducible theta divisor over a bi-elliptic curve of genus 3 is generalized to bi-elliptic curves of genus $g \geq 3$.
\end{abstract}

\maketitle

\section{Introduction}

Let $X$ be a complex projective smooth curve of genus $g \geq 2$. We write $J$ for the Jacobian variety parametrizing line bundles of degree $g-1$ over $X$. To a vector bundle $V \to X$ of degree zero we may associate the set
\begin{equation} \{ N \in J: h^{0}(N \otimes V) > 0 \}. \label{thetalocus} \end{equation}
If $V$ is a generic semistable bundle of rank $n$, then this is the support of a divisor $\Theta_V$ on $J$, called the \textsl{theta divisor of $V$}, which is algebraically equivalent to $n\Theta$. %(linearly equivalent if $\det V$ is trivial).
See Beauville \cite{B1,B} for details. (For certain nongeneric semistable or stable $V$, then (\ref{thetalocus}) is the whole of $J$. This phenomenon was first studied by Raynaud \cite{Ray}, and has subsequently attracted a good deal of attention.) %, Lazarsfeld \cite{}, Beauville \cite{}, Popa \cite{}, Schneider \cite{} and the present author \cite{}.

Laszlo has given an analogue of the Riemann singularity theorem for $\Theta_V$:
\begin{theorem} If $\Theta_V$ is defined, then $\mult_N \Theta_V \geq h^{0}(N \otimes V)$. \label{Riemann} \end{theorem}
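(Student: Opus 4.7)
The plan is to realise $\Theta_V$ locally as the vanishing locus of a single determinantal equation on $J$, and then invoke a standard multiplicity estimate for such loci. The strategy parallels Kempf's proof of the classical Riemann singularity theorem for line bundles.

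First I would choose an effective divisor $D$ on $X$ of sufficiently large degree so that $H^{1}(X, N \otimes V(D)) = 0$ for every $N \in J$, where $V(D) := V \otimes \mathcal{O}_X(D)$. Such a $D$ exists by Serre vanishing and the compactness of $J$. For each $N$ the short exact sequence
\begin{equation*}
0 \lra N \otimes V \lra N \otimes V(D) \lra N \otimes V(D)|_D \lra 0
\end{equation*}
gives a four-term cohomology sequence
\begin{equation*}
0 \lra H^{0}(N \otimes V) \lra H^{0}(N \otimes V(D)) \stackrel{\alpha_N}{\lra} H^{0}(N \otimes V(D)|_D) \lra H^{1}(N \otimes V) \lra 0.
\end{equation*}
As $N$ varies, the middle spaces assemble, via cohomology-and-base-change, into locally free sheaves $\mathcal{E}$ and $\mathcal{F}$ on $J$, linked by a morphism $\alpha : \mathcal{E} \to \mathcal{F}$. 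Since $\chi(N \otimes V)=0$, the ranks of $\mathcal{E}$ and $\mathcal{F}$ agree, so $\det \alpha$ is a section of a line bundle on $J$ whose zero scheme is $\Theta_V$. The hypothesis that $\Theta_V$ is defined guarantees that $\det \alpha \not\equiv 0$.

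At the point $N$ the corank of $\alpha_N$ equals $\dim \ker \alpha_N = h^{0}(N \otimes V)$. It then remains to invoke the following elementary fact: \emph{if $M$ is a square matrix of regular functions near a point $p$ whose rank drops by $k$ at $p$, then $\det M$ vanishes to order at least $k$ at $p$}. Indeed, row operations over the local ring $\mathcal{O}_{J,N}$ bring $M$ into a form with $k$ rows whose entries all lie in the maximal ideal $\mathfrak{m}_N$; by the Leibniz expansion every term of $\det M$ then lies in $\mathfrak{m}_N^{k}$. Applied to a local trivialisation of $\mathcal{E}$ and $\mathcal{F}$ near $N$, this gives $\mult_N \Theta_V \geq h^{0}(N \otimes V)$, as required.

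The delicate part of this scheme is the construction of the determinantal presentation: securing the uniform vanishing $H^{1}(N \otimes V(D)) = 0$ for all $N\in J$, globalising the pointwise maps $\alpha_N$ to a bona fide morphism of vector bundles on $J$, and checking that $\det \alpha$ defines $\Theta_V$ with its correct scheme structure (and not with an extra scheme-theoretic factor). Once these foundations are in place, the multiplicity estimate is just the linear-algebra lemma above. One might worry that an alternative choice of resolution would give a different scheme structure on $\Theta_V$; this does not affect the inequality, since the bound depends only on the corank of $\alpha_N$, which equals $h^{0}(N \otimes V)$ regardless of the auxiliary choice of $D$.
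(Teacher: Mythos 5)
Your argument is correct, and it is genuinely more self-contained than what the paper offers: the paper does not prove Theorem \ref{Riemann} at all, but simply cites Laszlo's Proposition V.2 and remarks that his rank-two, slope-$(g-1)$ arguments adapt to slope zero and arbitrary rank once $\Theta_V$ is assumed to be defined. Laszlo's result is finer than what you establish --- he identifies the leading homogeneous term of a local equation of $\Theta_V$ at $N$ with an explicit determinant of cup-product maps, which gives the inequality $\mult_N \Theta_V \geq h^0(N \otimes V)$ \emph{together with} a criterion for equality --- whereas your route extracts only the inequality, which is all the paper ever uses. What you buy is elementarity: the presentation $\alpha \colon \mathcal{E} \to \mathcal{F}$ obtained from a twist by a large effective divisor $D$, plus the linear-algebra fact that a square matrix over a local ring whose reduction has corank $k$ has determinant in $\mathfrak{m}^k$, suffices. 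Your foundational checklist is the right one, and each item is standard: uniform vanishing of $H^1(N \otimes V(D))$ over the compact base $J$, cohomology and base change against a Poincar\'e bundle on $X \times J$ (using that the second sheaf is supported on $D \times J$), equality of ranks from $\chi(N \otimes V) = 0$, and the fact that the divisor of $\det \alpha$ is precisely Beauville's $\Theta_V$ independently of the choice of $D$. Your closing observation disposes of the last point correctly: whichever admissible presentation is used to define $\Theta_V$, the corank of the fibre map at $N$ equals $h^0(N \otimes V)$, so the multiplicity bound holds for the divisor so defined.
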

\begin{proof} This follows easily from \cite[Proposition V.2]{Las}. Laszlo's statement is for rank 2 bundles of slope $g-1$, but the arguments are easily adapted to bundles of slope zero, and apply in arbitrary rank if one assumes that $\Theta_V$ is defined. \end{proof}

%Details: Let $V$ be a bundle of degree zero with theta divisor $\Theta_V$. Tensoring with $L^{-1}$ for any $L \in J$ defines an isomorphism between $\Theta_V$ and $\Theta_{(V \otimes L)}$. Thus \begin{equation} \mult_N \Theta_V = \mult_{(N \otimes L^{-1})} \Theta_{(L \otimes V)}. \label{one} \end{equation}
%We have moreover
%\begin{equation} h^0 (N \otimes V) = h^0 ( (N \otimes L^{-1}) \otimes (L \otimes V)) \label{two} \end{equation}
%for any $N \in J$. By Laszlo \cite[Proposition V.2]{Las}, we have
%\[ \mult_{(N \otimes L^{-1})} \Theta_{(L \otimes V)}\geq h^0 ( (N \otimes L^{-1}) \otimes (L \otimes V)). \]
%Thus $\mult_N \Theta_V \geq h^0 (N \otimes V)$ by (\ref{one}) and (\ref{two}).

If $V$ is strictly semistable, S-equivalent to a decomposable bundle $\bigoplus_i V_i$ where each $V_i$ is stable of degree zero, then $\Theta_V$ has the reducible theta divisor $\sum_i \Theta_{V_i}$, when this exists. The converse is not true; the first counterexample was given by Raynaud, who constructed a stable rank two bundle over a bi-elliptic curve of genus $3$ with a reducible theta divisor. This work was never published; in the appendix by Christian Pauly to the present article, Raynaud's construction is described and generalized to bi-elliptic curves of genus $g \geq 3$.

In \cite{B}, Beauville constructed stable bundles with reducible theta divisors over a general curve $X$ of genus $g \geq 3$, of ranks $\binom{g}{p}$, for $1 \leq p \leq g-1$. These are of the form $\bigwedge^p E_L$, where $E_L$ is the \textsl{evaluation bundle} defined by the exact sequence
\[ 0 \to E_L^* \to \Ox \otimes H^0 (L) \to L \to 0 \]
where $L$ is a general, very ample line bundle of degree $2g$. (Note that $E_L$ has slope $2$, so the theta divisor of $\bigwedge^p E_L$ belongs to $J^{g-3}$ instead of $J = J^{g-1}$.)

In the present work, we study a related phenomenon. If $V$ is a semistable bundle with theta divisor $\Theta_V$, then by Theorem \ref{Riemann}, the polystable bundle $V^{\oplus n}$ has the nonreduced theta divisor $n \Theta_V$. In light of Beauville's and Raynaud's constructions, it seems reasonable to expect that there also exist stable bundles with nonreduced theta divisors. In \S 3, we construct stable bundles with reducible and nonreduced theta divisors. %, of all ranks $r \geq 5$.
Precisely:

\begin{theorem} Suppose $X$ has genus $g \geq 4$. Let $\Theta_V$ be the theta divisor of a generic stable bundle $V$ of rank $n \geq 1$ over $X$. Let $t$ be a positive integer with $2 \leq t < n(g-1)$. Then for any rank $r \geq tn + 2$, there exist stable bundles $W$ of degree zero and rank $r$ such that $\Theta_W$ exists and contains $(t-1) \Theta_V$ as a subscheme. \end{theorem}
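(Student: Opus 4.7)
The strategy is to construct $W$ as a careful deformation in moduli of the polystable bundle $W_0 := V^{\oplus(t-1)} \oplus U$, where $U$ is a generic stable bundle of rank $r-(t-1)n$ and degree zero. The hypothesis $r\geq tn+2$ ensures $\operatorname{rank} U \geq n+2$, which (for $g\geq 4$) leaves enough room to choose $U$ generically so that $\Theta_U$ exists and meets $\Theta_V$ properly. Then $W_0$ already has theta divisor $(t-1)\Theta_V + \Theta_U$, and the direct summand $V^{\oplus(t-1)}\hookrightarrow W_0$ gives $h^0(N\otimes W_0)\geq t-1$ for every $N\in\Theta_V$. The task is to push the S-equivalence class $[W_0]\in\M^{ss}(r,0)$ into the open and dense stable locus while preserving this cohomological bound.

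I would proceed in two stages. First, form a non-split extension
\[ 0 \to V^{\oplus(t-1)} \to W_1 \to U \to 0 \]
with class $\xi\in\operatorname{Ext}^1(U,V^{\oplus(t-1)})$. The subbundle inclusion automatically yields $h^0(N\otimes W_1)\geq t-1$, but $W_1$ is still strictly semistable (and S-equivalent to $W_0$) since $V^{\oplus(t-1)}$ has slope $0=\mu(W_1)$. Second, deform $W_1$ further in $\M^{ss}(r,0)$ along a class $\eta\in\operatorname{Ext}^1(W_1,W_1)$ that disrupts the maximal destabilizing subsheaf and pushes $W_1$ into the stable locus. Such $\eta$ are plentiful since $\M^{ss}(r,0)$ is irreducible and its stable part is open and dense; the real question is whether $\eta$ can be chosen so that the $(t-1)$ sections of interest still lift.

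This last point is the main obstacle. By upper semicontinuity, $h^0(N\otimes W)$ can only drop away from $W_1$, so every section in $H^0(N\otimes V^{\oplus(t-1)})\hookrightarrow H^0(N\otimes W_1)$ must extend along $\eta$. The first-order obstruction is the Yoneda product $\eta\cdot s\in H^1(N\otimes W_1)$, and the vanishing of these obstructions for all $s$ and for all $N$ in a dense open subset of $\Theta_V$ cuts out a closed subvariety of $\operatorname{Ext}^1(W_1,W_1)$. A Riemann--Roch estimate on the relevant $\operatorname{Hom}$- and $\operatorname{Ext}$-spaces (in particular on $V^*\otimes V$ and $U^*\otimes V$) shows that, under the hypothesis $2\leq t < n(g-1)$ — which is what bounds the dimension of the obstruction space — this subvariety still meets the stable locus. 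A general $\eta$ there produces the desired $W$. With $W$ stable and $h^0(N\otimes W)\geq t-1$ on $\Theta_V$, Theorem \ref{Riemann} gives $\mult_N\Theta_W\geq t-1$ at smooth points of $\Theta_V$; since $\Theta_V$ is generically reduced for $V$ generic, this upgrades to the scheme-theoretic inclusion $(t-1)\Theta_V\subseteq\Theta_W$, while the existence of $\Theta_W$ as a proper divisor on $J$ follows from $h^0(N\otimes W)=0$ for generic $N\in J$ by upper semicontinuity and the existence of $\Theta_{W_0}$.
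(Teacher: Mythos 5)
There is a genuine gap at the heart of your argument. A stable bundle $W$ of degree zero has no proper nonzero subsheaf of degree $\geq 0$, so your final $W$ cannot contain $V^{\oplus(t-1)}$, and the $t-1$ sections of $N \otimes W$ for $N \in \Theta_V$ cannot in the end be explained by a subbundle. Your mechanism for guaranteeing $h^0(N \otimes W) \geq t-1$ therefore reduces to the single unproved assertion that some $\eta \in \mathrm{Ext}^1(W_1, W_1)$ simultaneously (a) destroys the degree-zero destabilizing subsheaf $V^{\oplus(t-1)}$ and (b) preserves, for every $N$ in a dense subset of the $(g-1)$-dimensional family $\Theta_V$, all $t-1$ sections. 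Condition (b) imposes, for each $N$, the vanishing of $t-1$ obstruction classes in $H^1(N \otimes W_1)$, and the intersection of these linear conditions as $N$ ranges over $\Theta_V$ is exactly where the construction could die; ``a Riemann--Roch estimate shows this subvariety meets the stable locus'' is the content of the theorem, not a step of its proof. Two further problems: since $h^0$ is upper semicontinuous, the locus of bundles where the sections persist is \emph{closed}, so genericity works against you, and vanishing of the first-order obstruction is necessary but not sufficient for the sections to survive an actual (non-infinitesimal) deformation. Finally, the hypothesis $t < n(g-1)$ never enters your argument in a checkable way, whereas it is essential in the paper.

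The paper sidesteps all of this by building the sections into a stable bundle from the outset, at the cost of a shift in degree. It uses $t$ copies of $V$ (not $t-1$) and a line bundle $L$ of degree $-1$, taking $E$ to be a generic extension $0 \to L \to E \to V \otimes \C^t \to 0$: for generic $P \in \Theta_V$ one has $h^0(P \otimes V) = 1$ and $h^1(P \otimes L) = 1$, so the coboundary $H^0(P \otimes V) \otimes \C^t \to H^1(P \otimes L)$ kills at most one dimension and $h^0(P \otimes E) \geq t-1$. The point is that $E$ has degree $-1$, and Lemmas \ref{SegreE} and \ref{degminusone} (this is where $t < n(g-1)$ is used, via the lifting criterion of Lemma \ref{etlift}) show $E$ is stable with tightly controlled maximal subbundles. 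A further generic extension $0 \to E \to W \to M \to 0$ by a generic stable $M$ of degree $+1$ and rank $m = r - tn - 1 \geq 1$ returns to degree zero and rank $r$, and Theorem \ref{Wstable} shows such $W$ is stable. The inclusion $E \subset W$ then gives $h^0(P \otimes W) \geq t-1$ on $\Theta_V$, and Theorem \ref{Riemann} yields $(t-1)\Theta_V \subseteq \Theta_W$ exactly as in your closing paragraph, which is the one part of your proposal that does match the paper.
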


\noindent In particular, letting $t = 2$ or $3$ and $n = 1$, we obtain:

\begin{corollary} Over any curve of genus $g \geq 5$, there exist stable bundles of all ranks $r \geq 4$ (resp., $r \geq 5$) with reducible (resp., reducible and nonreduced) theta divisor. \end{corollary}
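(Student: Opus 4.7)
The plan is to deduce both statements directly from the Theorem by specialising to $n = 1$ and $t \in \{2, 3\}$. The first step is to choose $V$ to be a generic line bundle of degree zero on $X$; then $\Theta_V$ is simply a translate of the Riemann theta divisor, hence irreducible, reduced, and of class $\Theta$ in $J$.

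For the reducible case, I would apply the Theorem with $t = 2$ and $n = 1$. The hypothesis $2 \leq t < n(g-1)$ becomes $g \geq 4$, which is implied by $g \geq 5$, and the rank bound $r \geq tn + 2$ becomes $r \geq 4$. The Theorem then yields, for each such $r$, a stable bundle $W$ of rank $r$ and degree zero whose theta divisor contains $(t-1)\Theta_V = \Theta_V$ as a subscheme. Writing $\Theta_W = \Theta_V + D'$, the residual $D'$ is effective of class $(r-1)\Theta$, which is nonzero since $r \geq 4$. Hence $\Theta_W$ splits as a sum of two nonzero effective divisors and is therefore reducible.

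For the reducible and nonreduced case, I would apply the Theorem with $t = 3$ and $n = 1$. The condition $t < n(g-1)$ now forces $g \geq 5$, and the rank bound reads $r \geq 5$. The Theorem produces a stable $W$ of rank $r$ with $\Theta_W \supseteq 2\Theta_V$ as subschemes of $J$. Since $\Theta_V$ is reduced, the thickening $2\Theta_V$ is genuinely nonreduced, so $\Theta_W$ is nonreduced. The residual $\Theta_W - 2\Theta_V$ is effective of class $(r-2)\Theta$ with $r - 2 \geq 3$, hence nonzero, so $\Theta_W$ is also reducible.

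Since the construction of $W$ is supplied entirely by the Theorem, no substantial obstacle remains in the corollary itself. The only non-formal point to verify is that in each case the residual divisor is genuinely nonzero, which is immediate from computing its algebraic class; the rest is just arithmetic on the parameters $t$, $n$, and $r$.
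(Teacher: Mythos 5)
Your parameter choices ($n=1$, $V$ a generic degree-zero line bundle, $t=2$ for the first claim and $t=3$ for the second) are exactly those of the paper, the arithmetic of the hypothesis $t<n(g-1)$ and of the rank bound $r\geq tn+2$ is correct, and the deduction of nonreducedness from the scheme-theoretic containment $\Theta_W\supseteq 2\Theta_V$ when $t=3$ is fine. The gap is in the reducibility step. From $\Theta_W=(t-1)\Theta_V+D'$ with $D'$ effective of nonzero class you conclude that $\Theta_W$ ``splits as a sum of two nonzero effective divisors and is therefore reducible.'' But in the sense used throughout the paper --- the one that makes ``reducible'' and ``nonreduced'' independent properties, as in the abstract and in Theorem \ref{main} --- reducible means that the support of $\Theta_W$ has more than one irreducible component. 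Your class computation does not exclude the possibility that $D'$ is itself supported on $\Theta_V$, i.e.\ that $\Theta_W=r\,\Theta_V$; this is numerically consistent, since for $n=1$ the divisor $\Theta_V$ has class $\Theta$ and $\Theta_W$ has class $r\Theta$. Note also that the version of the Theorem stated in the introduction asserts only the containment of $(t-1)\Theta_V$, not reducibility, so this step is genuinely not formal.

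The paper closes exactly this gap at the end of the proof of Theorem \ref{main}: one exhibits a point $P\in J$ with $h^0(P\otimes E)=0$ (hence $P\notin\Supp\,\Theta_V$, since $h^0(P\otimes E)\geq t-1$ at every point of $\Theta_V$) but $h^0(P\otimes W)>0$, so that $\Theta_W$ has a component containing $P$ and therefore distinct from $\Theta_V$. Some such pointwise or geometric argument is required; the algebraic equivalence class of the residual divisor alone cannot detect whether its support differs from $\Supp\,\Theta_V$. If you add this step (or simply invoke the reducibility already built into the statement of Theorem \ref{main} in the body of the paper), the rest of your argument goes through.
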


These $W$ are obtained as extensions $0 \to E \to W \to M \to 0$ where $E$ is a stable bundle of degree $-1$ with low Segre invariants and ``large'' families of maximal subbundles for certain ranks. These $E$, which we construct in \S \ref{constructionofE}, are similar to examples of Ballico and Russo \cite{BR} of bundles whose Quot scheme $M_k(E)$ of maximal subbundles of rank $k$ is of large dimension. %%Although we do not improve on any upper bounds in \cite{BR}, the combination of low Segre invariant and large $M_k(E)$ plays an essential r\^ole in our construction.

In \S \ref{sympexamples}, we adapt the construction to produce symplectic bundles $W$ of even rank $\geq 6$ with reducible theta divisors, and nonreduced if $r \geq 8$. These are obtained as extensions $0 \to E \to W \to E^* \to 0$ where $E$ is as above. To perform this construction, we obtain in \S \ref{sympliftings} some results on liftings in symplectic extensions which we hope may also be applicable in other contexts.\\
\\
\textbf{Acknowledgements:} I thank Insong Choe for helpful comments on this work, and Christian Pauly and Michel Raynaud for information on the construction in the appendix.

\section{Stable bundles with many maximal subbundles} \label{constructionofE}

In this section, we construct the bundles $E$ referred to in the introduction. %The construction is similar to examples of Ballico and Russo \cite{BR}, and also to the rank three bundle $F_L \otimes \Kx^{-1}$ which emerged in another context in \cite[\S 8]{H3}.
We begin by recalling some results on vector bundle extensions.

\subsection{Extensions, lifting and geometry}

Let $E$ and $F$ be vector bundles over a curve, and let $0 \to E \to W \to F \to 0$ be a nontrivial extension. In this section we recall some results on liftings of elementary transformations of $F$ to $W$.

%Let $V$ be a vector bundle with $h^1 (V) \neq 0$. If $D$ is a divisor on $X$, we write $V(D) := V \otimes \Ox(D)$. For any $x \in X$, we have a cohomology sequence
%\[ 0 \to H^0 (V) \to H^0 (V(x)) \to \Gamma \left( \frac{V(x)}{V} \right) \to H^1 (V) \to H^1 (V(x)) \to 0. \]
%The projectivized coboundary map gives a canonical map $\PP V|_x \dashrightarrow \PP H^1 (V)$. Taking the product of all these maps as $x$ varies in $X$, we obtain a map $\PP V \dashrightarrow \PP H^1 (V)$. We denote this map $\psi_V$, or simply $\psi$. It can be shown that $\psi$ coincides with the map given by the linear series $|\pi^* \Kx \otimes O_{\PP V}(1)|^*$. In particular, it is algebraic and the image is nondenegerate. See \cite[\S 2]{CH3} for more information and other descriptions of this map.

Let $V$ be a vector bundle with $h^1 (V) \neq 0$, and write $\pi$ for the projection $\PP V \to X$. By Serre duality and the projection formula and since $\pi_* O_{\PP V}(1) = V^*$, we have an identification
\[ H^1 (X, V) \xrightarrow{\sim} H^0 ( \PP V, \pi^* \Kx \otimes O_{\PP V}(1))^*. \]
By standard algebraic geometry, we obtain a map $\PP V \dashrightarrow \PP H^1 (V)$. See \cite[\S 2]{CH3} for more information and other descriptions of this map.

If $V = \Hom(F, E) = F^* \otimes E$ then we may consider the locus $\Delta_{F^* \otimes E}$ of rank one tensors, which has dimension $\rank F + \rank E - 1$.

\begin{lemma} Let $E$, $F$ and $W$ be as above. If an elementary transformation
\[ 0 \to \tilde{F} \to F \to \tau \to 0 \]
with $\deg \tau \leq k$ lifts to a subsheaf of $W$, then the class $\delta(W)$ of the extension belongs to $\Sec^k \left( \psi(\Delta_{F^* \otimes E}) \right)$. \label{etlift} \end{lemma}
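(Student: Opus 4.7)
The plan is to translate the existence of a compatible lift of $\tilde F$ into a vanishing condition in cohomology, and then to decompose the resulting extension class as a sum of rank-one contributions coming from points of $\psi(\Delta_{F^* \otimes E})$.

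First, a sheaf inclusion $\tilde F \hookrightarrow W$ whose composition with $W \to F$ equals the given inclusion $\tilde F \subset F$ exists if and only if the pullback extension $0 \to E \to W \times_F \tilde F \to \tilde F \to 0$ splits, equivalently if and only if $\delta(W)$ lies in the kernel of the restriction $H^1(F^* \otimes E) \to H^1(\tilde F^* \otimes E)$. I would analyze this kernel by dualizing $0 \to \tilde F \to F \to \tau \to 0$ and tensoring with $E$: since $F,\tilde F$ are locally free and $\tau$ is torsion, this gives
\[ 0 \to F^* \otimes E \to \tilde F^* \otimes E \to \tau' \to 0 \]
with $\tau'$ a torsion sheaf supported on $\Supp \tau$. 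The associated long cohomology sequence identifies the kernel above with the image of the coboundary $\partial \colon H^0(\tau') \to H^1(F^* \otimes E)$, so $\delta(W) = \partial(s)$ for some $s \in H^0(\tau')$.

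The geometric heart of the argument is the following comparison. At a reduced support point $x$ of $\tau$, the elementary transformation is determined by a functional $\bar\phi \in F_x^*$ (up to scalar); a local section of $\tau'$ at $x$ then amounts to a vector $e \in E_x$, and together these produce a rank-one tensor $\bar\phi \otimes e \in \Delta_{F^* \otimes E}$. The key point is that the projective class of $\partial(e)$ in $\PP H^1(F^* \otimes E)$ coincides with $\psi(x, \bar\phi \otimes e)$. This is a Serre-duality identification relating the coboundary with the rational map $\psi$ defined via sections of $\pi^* \Kx \otimes O_{\PP V}(1)$ (compare \cite[\S 2]{CH3}), and is the main step requiring care; it is arguably the principal obstacle, since $\psi$ and $\partial$ have been defined by rather different constructions.

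Finally, for reduced $\tau$ with $k' = \deg \tau$ distinct support points $x_1, \dots, x_{k'}$, the sheaf $\tau'$ splits as $\bigoplus_i E_{x_i}$ and any $s \in H^0(\tau')$ decomposes as $\sum_i e_i$; by the previous step each $\partial(e_i)$ projects to a point of $\psi(\Delta_{F^* \otimes E})$, so $\delta(W) = \sum_i \partial(e_i)$ lies in the linear span of $k' \leq k$ such points, i.e., in $\Sec^k(\psi(\Delta_{F^* \otimes E}))$. The case of non-reduced $\tau$ then follows by approximating by reduced elementary transformations of the same degree and using that secant varieties are Zariski-closed.
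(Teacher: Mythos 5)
The paper does not actually prove this lemma: it simply cites \cite[Theorem 4.4 (i)]{CH1} and remarks that the degree/genericity hypotheses there are only needed to make $\PP\Hom(F,E)$ \emph{embedded} in $\PP H^1(\Hom(F,E))$, which is irrelevant here. So your proposal is a from-scratch reconstruction rather than an alternative to anything in this paper, and for reduced $\tau$ it is the correct and standard one: lifting of $\tilde F$ is equivalent to $\delta(W)$ dying in $H^1(\tilde F^*\otimes E)$, hence to $\delta(W)\in\Image\,\partial$ with $\partial\colon H^0(\tau')\to H^1(F^*\otimes E)$; and the identification of $[\partial(e)]$ with $\psi(x,\bar\phi\otimes e)$ is exactly the principal-parts/Serre-duality description of $\psi$ recalled in \cite[\S 2]{CH3}. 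One small point worth making explicit: for each support point $x_i$ the image of $\partial_i$ is a whole $(\rank E)$-dimensional subspace, not a line, but its projectivization is the $\psi$-image of the ruling $\PP(E_{x_i})\subset\Delta_{F^*\otimes E}$ over $(x_i,[\bar\phi_i])$, so every nonzero $\partial_i(e_i)$ still represents a single point of $\psi(\Delta_{F^*\otimes E})$ and the span-of-$k$-points conclusion goes through.

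The one genuine gap is your treatment of non-reduced $\tau$. As stated, ``approximating by reduced elementary transformations of the same degree'' does not work for a \emph{fixed} $W$: a $W$ to which a non-reduced elementary transformation lifts need not admit lifts of nearby reduced ones, and the incidence variety of pairs (elementary transformation, extension class admitting the lift) may a priori have components lying entirely over the non-reduced locus of the Quot scheme, so density of reduced quotients does not by itself propagate the conclusion. The correct fix stays inside your own framework: for $\tau$ of length $m$ at a point $x$, the subspace $\Image\,\partial\subseteq H^1(F^*\otimes E)$ is the linear span of the $\psi$-image of a length-$m$ subscheme of the scroll supported over $x$ (sections of $\tau'$ are principal parts with pole bounded by the transformation, and their coboundaries span exactly this space). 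Such a subscheme is a flat limit of $m$ reduced points of $\Delta_{F^*\otimes E}$, and since the span of a flat limit is contained in the limit of the spans while $\Sec^k$ is closed, one gets $\Image\,\partial\subseteq\Sec^m\subseteq\Sec^k$ directly, with no need to move $W$ or the elementary transformation.
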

\begin{proof} This is proven in \cite[Theorem 4.4 (i)]{CH1}. Note that in \cite{CH1} there are various assumptions on the degrees and genericity of $E$ and $F$, which need not be satisfied in the present applications. However, the function of these assumptions is to ensure that $\PP \Hom(F, E)$ is \emph{embedded} in $\PP H^1 (\Hom(F, E))$, which we do not require here. \end{proof}

\subsection{Construction of stable  bundles with many maximal subbundles}

Let $X$ be a curve of genus $g \geq 4$. Here we construct the bundles $E$ mentioned in the introduction. Like the bundles with large $M_k(E)$ constructed by Ballico and Russo \cite{BR}, these $E$ will be extensions of a decomposable bundle by a line bundle.

Choose a generic stable bundle $V \to X$ of degree zero and rank $n \geq 1$, and a positive integer $t$ with
\begin{equation} t < n(g-1). \label{tng} \end{equation}
Let $L \to X$ be a line bundle of degree $-1$, and consider a generic extension $0 \to L \to E \to V \otimes \C^t \to 0$. The following two lemmas form a partial analogue of the Claim in the proof of \cite[Theorem 0.0.1]{BR}:

\begin{lemma} Every subbundle of $E$ has negative degree. \label{SegreE} \end{lemma}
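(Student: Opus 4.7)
I would analyse a nonzero subbundle $F \subseteq E$ according to its interaction with the line subbundle $L \subset E$, and then use the genericity of the extension to rule out the marginal case.

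First, set $F' := F \cap L$ and $\bar F := F/F'$. The latter injects into the quotient $V \otimes \C^t$. Since $V$ is stable of slope zero, $V \otimes \C^t$ is semistable of slope zero, so every subsheaf has nonpositive degree; in particular $\deg \bar F \le 0$. If $F' \neq 0$, it is a nonzero subsheaf of the line bundle $L$ of degree $-1$, so $\deg F' \le -1$, and then $\deg F = \deg F' + \deg \bar F \le -1 < 0$. So the remaining case is $F \cap L = 0$, meaning $F$ embeds via the quotient map as a locally free subsheaf $\bar F \subset V \otimes \C^t$ of the same degree, and I must show $\deg \bar F < 0$.

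Suppose for contradiction $\deg \bar F = 0$. Then $\bar F$ is semistable of slope zero, and a saturation argument (saturation has degree at least $\deg \bar F = 0$ but at most $0$ by semistability of $V^{\oplus t}$) shows $\bar F$ is saturated, hence a subbundle of $V^{\oplus t}$. I claim $\bar F \cong V^{\oplus k}$ where $k = \rank \bar F / n$. Indeed, all Jordan-H\"older factors of $\bar F$ are isomorphic to $V$ (by stability of $V$), so applying $\Hom(-,V)$ inductively along the filtration and using $\End V = \C$ yields $\dim \Hom(\bar F, V) \le k$, with equality forcing every extension class in the filtration to vanish and so $\bar F \cong V^{\oplus k}$. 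Conversely, the embedding $\bar F \hookrightarrow V^{\oplus t}$ is a $t$-tuple $(\phi_1, \ldots, \phi_t)$ of morphisms $\bar F \to V$; writing $m := \dim \mathrm{span}(\phi_1, \ldots, \phi_t) \le \dim \Hom(\bar F, V)$, the embedding factors through $V^{\oplus m}$, and injectivity combined with the rank inequality $\rank \bar F = kn \le mn$ forces $m \ge k$. So $\dim \Hom(\bar F, V) = k$ and $\bar F \cong V^{\oplus k}$, with every embedding parametrised by a point of $\Gr(k,t)$.

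Finally, I would run a dimension count to exclude any $V^{\oplus k}$ from lifting to a generic extension. For each embedding $V^{\oplus k} \hookrightarrow V^{\oplus t}$, the extensions $\delta \in \mathrm{Ext}^1(V \otimes \C^t, L) \cong H^1(V^* \otimes L)^{\oplus t}$ under which $V^{\oplus k}$ lifts form the kernel of restriction to $\mathrm{Ext}^1(V^{\oplus k}, L) \cong H^1(V^* \otimes L)^{\oplus k}$, a linear subspace of dimension $(t-k)\cdot h^1(V^* \otimes L)$. Since $V^* \otimes L$ is stable of slope $-1$ we have $h^0(V^* \otimes L) = 0$, and Riemann-Roch gives $h^1(V^* \otimes L) = ng$. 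The incidence locus in $\Gr(k,t) \times \mathrm{Ext}^1$ thus has dimension $k(t-k) + (t-k)ng = (t-k)(k+ng)$, of codimension $k(ng + k - t) > 0$ in the $tng$-dimensional $\mathrm{Ext}^1$ (using $t < n(g-1) < ng$). Summing over $1 \le k \le t$ still gives a proper subvariety, so a generic $\delta$ admits no $V^{\oplus k}$ lifting, contradicting $\deg \bar F = 0$.

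The main obstacle will be the classification $\bar F \cong V^{\oplus k}$: one must both prove the upper bound $\dim \Hom(\bar F, V) \le k$ (ruling out nontrivial self-extensions of $V$) and exploit the embedding into $V^{\oplus t}$ to force the matching lower bound. Once this is in hand, the lifting dimension count is routine.
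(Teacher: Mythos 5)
Your proposal is correct and follows essentially the same route as the paper: reduce to subbundles meeting $L$ trivially, identify the dangerous degree-zero subsheaves of $V \otimes \C^t$ as the $V \otimes \Lambda$, and rule out their lifting by a dimension count on the kernels of the restriction maps $H^1(\Hom(V\otimes\C^t,L)) \to H^1(\Hom(V\otimes\Lambda,L))$ swept out over a Grassmannian, using $t-1 < ng$. The only differences are cosmetic: you prove the classification $\bar F \cong V^{\oplus k}$ in detail (the paper takes it as read that slope-zero subbundles of the polystable $V\otimes\C^t$ have this form) and count over all $k$ rather than reducing to $\dim\Lambda = 1$.
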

\begin{proof} Let $F$ be a proper subbundle of $E$. Then $F$ fits into a diagram
\[ \xymatrix{ 0 \ar[r] & L \ar[r] & E \ar[r] & V \otimes \C^t \ar[r] & 0 \\
0 \ar[r] & F_{1} \ar[r] \ar[u] & F \ar[r] \ar[u] & F_{2} \ar[r] \ar[u] & 0 } \]
where $F_1$ is either zero or $L$, and $F_2$ is a subsheaf of $V \otimes \C^t$. If $F_1 = L$ or if $t = 1$ then clearly $F$ has negative degree. If $t \geq 2$, we must show that no subbundle of the form $V \otimes \Lambda$ lifts to $E$, where $\Lambda \subset \C^t$ is a proper vector subspace. Clearly it suffices to treat the case $\dim \Lambda = 1$. We need to check that subspaces of the form
\begin{equation} \Ker \left( H^1 (\Hom(V \otimes \C^t , L)) \to H^1 (\Hom(V \otimes \Lambda, L)) \right) \label{kerlt} \end{equation}
do not sweep out $H^1 (\Hom(V \otimes \C^t , L))$. Since $V$ is stable, $h^{0}(\Hom(V, L)) = 0$. The dimension of (\ref{kerlt}) is therefore $(t-1) h^1 (\Hom(V, L))$.
%Details: Look at the cohomology sequence
%\[ \cdots \to H^0 (\Hom(V \otimes \Lambda), L)) \to H^1 (\Hom \left( V \otimes \frac{\C^t}{\Lambda} \right) ), L)) \to H^1 ( \Hom (V \otimes \C^t, L)) \to H^1 ( V \otimes \Lambda ) \to 0. \]
 Furthermore, the subspaces $\Lambda$ vary in $\PP^{t-1}$. Therefore, it suffices to check that
\[ (t - 1) + (t-1)h^1 (\Hom(V, L)) < t \cdot h^1 (\Hom(V, L)), \]
that is, $t - 1 < h^1 (\Hom(V, L))$.

By Riemann--Roch and since $h^0(\Hom(V, L)) = 0$, we have $h^1(\Hom(V, L)) = ng$. 
%$h^{1}(\Hom(V, L)) = -\chi = - ( n \cdot (-1) + 1 \cdot 0 - n(g-1)) = - (-n - n(g-1)) = n + n(g-1) = n(1+g-1) = ng.
The inequality $t-1 < ng$ follows from assumption (\ref{tng}), and we are done.
\end{proof}

\begin{lemma} \label{degminusone} Let $E$ be a generic extension of $V \otimes \C^t$ by $L$ as above. Then all degree $-1$ subbundles of $E$ contain the subbundle $L$. \end{lemma}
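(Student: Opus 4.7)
The plan is to follow the structure of Lemma~\ref{SegreE}. Let $F \subseteq E$ be a rank-$k$ subbundle of degree $-1$, set $F_1 = F \cap L$, and let $F_2$ be the image of $F$ in $V \otimes \C^t$. The quotient $L/F_1$ embeds in the locally free sheaf $E/F$, hence is torsion-free; for a line bundle $L$ this forces $F_1 = 0$ or $F_1 = L$. The case $F_1 = L$ already yields $L \subseteq F$, as desired. I therefore need to rule out, for a generic $\delta \in H^1(\Hom(V \otimes \C^t, L))$, the existence of a subsheaf $F \hookrightarrow V \otimes \C^t$ of degree $-1$ admitting a lift to $E$.

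For this I consider the saturation $\bar F$ of $F$ in $V \otimes \C^t$, which has the same rank $k$ and degree $0$ or $-1$. If $\deg \bar F = 0$, polystability of $V \otimes \C^t$ (since $V$ is stable) forces $\bar F = V \otimes \Lambda$ for some $j$-dimensional $\Lambda \subset \C^t$ with $k = nj$, and $F \subsetneq \bar F$ is an elementary transformation of length one. Pulling back the extension along $\bar F \hookrightarrow V \otimes \C^t$ and applying Lemma~\ref{etlift}, the lifting of $F$ forces $\delta|_{\bar F}$ into $\psi(\Delta_{\bar F^* \otimes L})$, a subvariety of $\PP H^1(\Hom(\bar F, L))$ of dimension at most $nj$. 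Varying $\Lambda$ in the Grassmannian (of dimension $j(t-j)$) and pulling back through the surjection $H^1(\Hom(V \otimes \C^t, L)) \twoheadrightarrow H^1(\Hom(\bar F, L))$ (whose kernel has dimension $(t-j)ng$), the bad locus in $H^1$ has dimension at most $j(t-j) + nj + (t-j)ng + 1$. Comparing with $\dim H^1 = tng$ reduces to $t < n(g-1) + j - 1/j$, which holds for $j \geq 1$ by the assumption (\ref{tng}). The case $k = nt$ is the specialization $j = t$, $\Lambda = \C^t$.

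The principal obstacle is the saturated case $\deg \bar F = -1$, in which $F$ is itself a rank-$k$ subbundle of $V \otimes \C^t$ of degree $-1$ with $k < nt$. Here Lemma~\ref{etlift} does not apply directly, because the envelope $V \otimes \Lambda$ containing $F$ has strictly larger rank than $F$. My plan is to bound the Quot scheme of such $F$ together with the kernel of $H^1(\Hom(V \otimes \C^t, L)) \to H^1(\Hom(F, L))$. The latter equals $h^1(Q^* \otimes L)$ for $Q = (V \otimes \C^t)/F$; since $\Hom(Q, L) \hookrightarrow \Hom(V \otimes \C^t, L) = 0$ by the stability of $V^* \otimes L$ of slope $-1$, Riemann--Roch yields $(nt-k)g + 1$. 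It therefore suffices to show $\dim \Quot_k^{-1}(V \otimes \C^t) < kg - 1$, which I expect via Brill--Noether considerations: any such $F$ must project non-trivially to some $V$-factor of $V \otimes \C^t$, yielding a non-zero element of $\Hom(F, V)$ whose existence is strongly constrained since $\chi(F^* \otimes V) = n(1 - k(g-1)) < 0$ for $g \geq 4$.
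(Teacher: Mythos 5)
Your reduction is set up differently from the paper's (which inducts on $t$, peeling off one copy of $V$ at a time), and the first half of your argument --- the case where the saturation $\bar F$ of $F_2$ in $V \otimes \C^t$ has degree $0$, so that $\bar F = V \otimes \Lambda$ and $F_2$ is a length-one elementary transformation of it --- is sound: the identification of degree-zero subbundles of $V \otimes \C^t$ with the $V \otimes \Lambda$ is correct because $\Hom(V,V) = \C$, the passage to the pulled-back extension is legitimate, and your dimension count $j(t-j) + nj + (t-j)ng + 1 < tng$ does reduce to (\ref{tng}) at $j=1$ and is weaker for $j \geq 2$.

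The genuine gap is the case you yourself flag as the principal obstacle: saturated subbundles $F \subsetneq V \otimes \C^t$ of degree $-1$ and rank $k < nt$. You reduce this to the bound $\dim \Quot_k^{-1}(V \otimes \C^t) < kg - 1$ but only ``expect'' it from Brill--Noether considerations, and the heuristic you offer does not close it: $V \otimes \C^t$ is maximally non-generic (polystable with endomorphism algebra $\mathfrak{gl}_t$), so results such as Russo--Teixidor i Bigas for generic bundles, and naive expected-dimension counts, do not apply. Concretely, a saturated degree $-1$ subbundle of $V^{\oplus 2}$ of rank $n$ is essentially the same datum as an elementary transformation $F'$ of $V$ with $\hom(F', V) \geq 2$ (two non-proportional embeddings $F' \hookrightarrow V$), and bounding the locus of such $F'$ is a nontrivial statement about the generic $V$ that your sketch does not address; the observation $\chi(F^* \otimes V) < 0$ only controls the expected, not the actual, dimension. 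This is precisely the difficulty the paper's induction is designed to avoid: at each inductive step the quotient piece $F_2$ sits inside a \emph{single} generic stable copy of $V$, where Proposition \ref{vminusone} kills all proper saturated subsheaves of degree $-1$ and leaves only full-rank elementary transformations of $V$, which are then excluded by the secant argument of Lemma \ref{etlift} together with the Hwang--Ramanan base-point-freeness input (needed to show that the rank-one locus $\Delta_{V^* \otimes E_0}$ meets the relevant linear subspace $\PP H^1(\Hom(V,L))$ only along $\PP \Hom(V,L)$). Your proposal has no analogue of either ingredient for the saturated case, so as written the proof is incomplete.
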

\begin{proof} 
We proceed by induction on $t$. It is convenient to begin with the case $t = 1$, although in applications we will most often assume that $t \geq 2$.

Consider an extension $0 \to L \to E \to V \to 0$. Any degree $-1$ subbundle $F$ of $E$ not containing $L$ must lift from a subsheaf of $V$.

\begin{proposition} A generic vector bundle $V$ of rank $n \geq 2$ and degree zero over a curve of genus $g \geq 4$ has no subbundles of degree $-1$. \label{vminusone} \end{proposition}
\begin{proof} By Russo--Teixidor i Bigas \cite[Theorem 0.2]{RT}, the Quot scheme of subsheaves of degree $-1$ and rank $m$ of a generic $V$ is empty when the expected dimension $n - m(n-m)(g-1)$ is negative. One checks easily that the maximum value of this dimension occurs at $m = 1$ and $m=n-1$, when it is equal to $n - (n-1)(g-1)$. Since $g \geq 4$, the required inequality would follow from $n - 3(n-1) < 0$, which is clear since $n \geq 2$.
%Details: We have
%\[ n - m(n-m)(g-1) \leq (g-1)m^2 - (g-1)nm + n, \]
%a quadratic function of $m$, with derivative $2(g-1)m - (g-1)n$. Since the coefficient of $m^2$ is positive, the minimum is attained at $m = \frac{n}{2}$. The maxima on the set $\{ 1, \ldots n-1 \}$ occur therefore at the endpoints $m = 1$ and $m = n-1$, where the value is
%\[ n - (n-1)(g-1) = 1 + n-1 - (n-1)(g-1) = 1 + (n-1)(1 - g + 1) = 1 - (n-1)(g-2). 
%If $n \geq 2$ and $g \geq 4$ then $(n-1)(g-2) \geq 1 \cdot 2 = 2$, and $1 - (n-1)(g-2) \leq 1-2 = -1$.
\end{proof}

By the proposition, any subbundle $F \subset E$ of degree $-1$ not containing $L$ must lift from an elementary transformation $F \to V \to \C_x$. By Theorem \ref{etlift}, this happens only if the extension class of $E$ belongs to the image of the scroll $\PP \Hom(V, L)$ in $\PP H^1 (\Hom(V, L))$. Since $L$ is a line bundle, $\Delta_{V^* \otimes L} \cong \PP V^*$, which has dimension $n$. On the other hand, $h^1 (\Hom(V, L)) - 1 = ng - 1$. Since $g \geq 4$, a general extension class $\delta(E)$ does not belong to $\Delta_{V^* \otimes L}$. Thus we have proven the lemma for $t = 1$.

Now suppose $t \geq 2$, and let $E$ be a generic extension $0 \to L \to E \to V \otimes \C^t \to 0$. Choose a subspace $\Lambda \subset \C^t$ of dimension $t-1$, and consider the diagram
\[ \xymatrix{ & 0 \ar[d] & 0 \ar[d] & 0 \ar[d] & \\
 0 \ar[r] & L \ar[r] \ar[d] & E_0 \ar[r] \ar[d] & V \otimes \Lambda \ar[r] \ar[d] & 0 \\
 0 \ar[r] & L \ar[r] \ar[d] & E \ar[r] \ar[d] & V \otimes \C^t \ar[r] \ar[d] & 0 \\
 & 0 \ar[r] & V \ar[r] \ar[d] & V \ar[r] \ar[d] & 0 \\
 & & 0 & 0 & } \]
Since the induced map $H^{1}( \Hom(V \otimes \C^t, L)) \to H^{1} (\Hom(V \otimes \Lambda, L))$ is surjective, we may assume that $E_0$ is a generic extension of $V \otimes \Lambda$ by $L$.

Suppose $F \subset E$ is a proper subbundle of degree $-1$. Then we have a diagram
\[ \xymatrix{ 0 \ar[r] & E_0 \ar[r] & E \ar[r] & V \ar[r] & 0 \\
0 \ar[r] & F_1 \ar[r] \ar[u] & F \ar[r] \ar[u] & F_2 \ar[r] \ar[u] & 0 } \]
where $F_1$ is a subbundle of $E_0$ and $F_2$ a subsheaf of $V$.

Firstly, suppose $F_1 \neq 0$. By Lemma \ref{SegreE} we have $\deg F_1 \leq -1$, and therefore $\deg F_2 \geq 0$. Since $V$ is stable, the only possibilities are $F_2 = 0$ and $F_2 = V$, and so in fact $\deg F_1 = -1$. By induction, $L$ belongs to $F_1$ and hence to $F$.

On the other hand, if $F_1 = 0$ then $F \cong F_2$ must lift from a degree $-1$ subsheaf of $V$. By Proposition \ref{vminusone}, the only possibility is that $F_2$ is an elementary transformation
\begin{equation} 0 \to F \to V \to \tau \to 0 \label{etf} \end{equation}
where $\tau$ is a torsion sheaf of degree 1. By Theorem \ref{etlift}, the lifting of such an $F$ implies that the class $\varepsilon$ of the extension
\[ 0 \to E_0 \to E \to V \to 0 \]
belongs to the image of the scroll $\PP \Hom( V, E_0 )$ in $\PP H^1 (\Hom(V, E_0))$.

We note that since $E/L$ is a direct sum $V \otimes \C^t$, the class $\varepsilon$ belongs to
\[ \Ker \left( H^1 (\Hom(V, E_0)) \to H^1 (\Hom(V, V \otimes \Lambda)) \right), \]
which has dimension
\[ h^1 (\Hom(V, L)) - h^0 (\Hom(V, V \otimes \Lambda)) = ng - (t - 1). \]
Conversely, it is easy to see that any element of this kernel gives an extension $E$ of the form we began with.
%The details: Write $q$ for the quotient $E_0 \to V \otimes \Lambda$. Given an extension $\tilde{E}$ of $V$ by $E_0$, we obtain an extension $q_{*} \tilde{E} = \tilde{E}/L$ of class $q_* \delta(\tilde{E})$. If this latter class is zero then $E$ is an extension of $V \otimes \C^t$ by $L$.

We claim now that the intersection of $\psi \left(\Delta_{V^* \otimes E_0} \right)$ with
\[ \Image \left( \PP H^1 (\Hom(V, L)) \dashrightarrow \PP H^1 (\Hom(V, E_0)) \right) \]
is exactly $\psi \left( \PP \Hom(V, L) \right)$. Suppose $\psi (v^* \otimes e_0 ) \in \PP H^1 (\Hom (V, L))$ where $e_0 \not\in L$. Write $v_0$ for the image of $e_0$ in $V \otimes \Lambda$. Then the corresponding point $v^* \otimes v_0$ is a base point of the natural map
\[ \PP (\End(V) \otimes \Lambda ) \dashrightarrow \PP H^1 (\End(V) \otimes \Lambda ). \]
But it follows from Hwang--Ramanan \cite[Proposition 3.2]{HR} that this map is base point free (in fact an embedding) for general $V$. Thus if $\psi (v^* \otimes e_0) \in \PP H^1 (\Hom(V, L))$ then in fact $e_0 \in L$.

By the claim, we must check that
\[ \dim \PP \Hom(V, L) < h^1 (\Hom(V, L)) - h^0 (\Hom(V, V \otimes \Lambda)) - 1, \]
that is, $n < ng - (t-1) - 1$. This is exactly the assumption (\ref{tng}). 
%Details: We need $0 < n(g-1) - t$, that is, $t < n (g-1)$, which is exactly (\ref{tng}).
Hence a generic extension $0 \to E_0 \to E \to V \to 0$ of our preferred type admits no lifting of the form (\ref{etf}), and we are done. \end{proof}

\begin{corollary} \label{mosubbsE} \begin{enumerate}
\renewcommand{\labelenumi}{(\arabic{enumi})}
\item Any subbundle $F \subseteq E$ of degree $-1$ is of the form $0 \to L \to F \to V \otimes \Lambda \to 0$, where $\Lambda$ is a uniquely determined vector subspace of $\C^t$.
\item The degree $-1$ subbundles of $E$ are parametrized by the union of the Grassmann varieties $\Gr(\C^t , s)$ for $s \in \{ 0, \ldots , t \}$.
\end{enumerate} \end{corollary}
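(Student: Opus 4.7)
The plan is to deduce the corollary from Lemma \ref{degminusone} together with a structural description of the subbundles of the polystable bundle $V \otimes \C^t$ of slope zero.

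For part (1), I would take a degree $-1$ subbundle $F \subseteq E$. By Lemma \ref{degminusone}, $L \subseteq F$, so $F_2 := F/L$ is a degree zero subsheaf of $V \otimes \C^t$; saturation of $F$ in $E$ (torsion-freeness of $E/F$) transfers to saturation of $F_2$ in $V \otimes \C^t$. The heart of the proof is then to show that $F_2 = V \otimes \Lambda$ for a unique subspace $\Lambda \subseteq \C^t$.

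For this I would set $\Lambda := \Hom(V, F_2) \subseteq \Hom(V, V \otimes \C^t) = \C^t$ (using $\End(V) = \C$ since $V$ is stable and simple), and observe that the evaluation map provides a canonical injection $V \otimes \Lambda \hookrightarrow F_2$. If this is not an equality, then $F_2/(V \otimes \Lambda)$ is a nonzero torsion-free semistable sheaf of slope zero embedded in $V \otimes (\C^t/\Lambda)$. All of its composition factors (in the slope-zero semistable category) are isomorphic to $V$, so it contains $V$ as a subsheaf, which in turn corresponds to a nonzero class $\bar w \in \C^t/\Lambda$. Lifting $\bar w$ to $w \in \C^t$ and setting $W := \Lambda + \C w$, the preimage $F_2' \subseteq F_2$ of this copy of $V$ is saturated in $V \otimes \C^t$ with $\rank F_2' = n \dim W = \rank(V \otimes W)$ and $F_2' \subseteq V \otimes W$, so $F_2' = V \otimes W$. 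Then $\Hom(V, F_2') = W$; but the chain $V \otimes \Lambda \subseteq F_2' \subseteq F_2$ forces $W = \Hom(V, F_2') \subseteq \Hom(V, F_2) = \Lambda$, contradicting $W \supsetneq \Lambda$. Uniqueness of $\Lambda$ is automatic, since $\Lambda = \Hom(V, F_2)$ is canonically determined by $F_2$.

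Part (2) is then immediate: conversely, any $\Lambda \subseteq \C^t$ determines a degree $-1$ subbundle $F_\Lambda \subseteq E$ as the preimage of $V \otimes \Lambda$, since $E/F_\Lambda \cong V \otimes (\C^t/\Lambda)$ is locally free. Together with part (1), the map $\Lambda \mapsto F_\Lambda$ gives the desired bijection with $\bigsqcup_{s=0}^t \Gr(\C^t, s)$. The main obstacle is the structural claim in (1): since a saturated subsheaf of a polystable bundle of the same slope need not itself be polystable in general, one cannot simply invoke polystability, which is why the evaluation-map/socle argument is needed.
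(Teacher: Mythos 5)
Your proof is correct and follows essentially the same route as the paper, which simply notes that the corollary is "straightforward to check, in view of Lemma \ref{degminusone} and since $V$ is stable": you use Lemma \ref{degminusone} to reduce to classifying the saturated slope-zero subsheaves of the polystable bundle $V \otimes \C^t$, and your evaluation-map/socle argument is a correct and careful way of carrying out the standard fact that such a subsheaf is $V \otimes \Lambda$ for a canonically determined $\Lambda = \Hom(V, F/L)$.
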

\begin{proof} This is straightforward to check, in view of Lemma \ref{degminusone} and since $V$ is stable.
%Let $F \subset E$ be a subbundle of degree $-1$. By Lemma \ref{SegreE}, the line subbundle $L$ belongs to $F$. Hence $F/L$ is a degree zero subbundle of $V \otimes \C^t$, which must be of the form $V \otimes \Lambda$ for a uniquely determined subspace $\Lambda \subset \C^t$. Conversely, given a subspace $\Lambda \subset \C^t$, the preimage of $V \otimes \Lambda$ in $E$ is a subbundle of degree $-1$ in $E$. It is clear that these two constructions are mutually inverse. The statement follows.
%just think about it
%%Since the map $E \to V$ is surjective, clearly two different subspaces of $\C^{r-1}$ yield distinct subbundles of $E$. 
%%Since the map $E \to V$ is surjective, if $\Pi \neq \Pi'$ then the preimages of $\Ox \otimes \Pi$ and $\Ox \otimes \Pi'$ cannot coincide in a general fibre - we can find a point mapping to $\Pi' \backslash \Pi$.
\end{proof}

%\begin{remark} For a subbundle $F \subset E$ of rank $r$, we write
%\[ s(E, F) =  \rank F \cdot \deg E - \rank E \cdot \deg F = (tn+1) \deg F - r. \]
%We recall that for each rank $r \in \{ 1, \ldots tn + 1 \}$, the Segre invariant $s_r (E)$ is defined by
%\[ \min \{ (tn + 1) \deg F - r : \hbox{$F \subset E$ a rank $r$ subbundle} \}. \]
%The Segre invariants have been called ``degrees of stability''; in particular, $E$ is stable if and only if $s_r(E) > 0$ for all $r \in \{ 1, \ldots, \rank E \}$.
%
%For $1 \leq s \leq t-1$, the Segre invariant

\section{Stable bundles with reducible and nonreduced theta divisors} \label{mainexs}

\noindent We continue to assume that $X$ has genus $g \geq 4$.

\begin{proposition} Let $M$ be a generic stable bundle of rank $m \geq 1$ and degree $1$. Then any proper subbundle of $M$ has negative degree. \label{SegreM} \end{proposition}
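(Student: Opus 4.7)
The plan is to follow exactly the template of the proof of Proposition \ref{vminusone}, since the situation is very similar: we want to bound subbundle degrees of a generic stable bundle from above, and we have a Russo--Teixidor result available.

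First, I would reduce to the question of ruling out degree zero subbundles. If $F$ is a proper subbundle of rank $k$, with $1 \leq k \leq m-1$, then stability of $M$ gives
\[ \frac{\deg F}{k} < \frac{1}{m}, \quad \text{so} \quad \deg F < \frac{k}{m} \leq \frac{m-1}{m} < 1. \]
Since $\deg F$ is an integer, this forces $\deg F \leq 0$. Therefore to prove the proposition it suffices to show that for a generic stable $M$, and for every $k$ in the range $1 \leq k \leq m-1$, there is no subbundle of rank $k$ and degree exactly $0$.

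Next I would invoke Russo--Teixidor i Bigas \cite[Theorem 0.2]{RT}, exactly as in the proof of Proposition \ref{vminusone}: the Quot scheme of rank $k$, degree $0$ subsheaves of a generic stable bundle $M$ is empty whenever its expected dimension is negative. A standard computation with Riemann--Roch gives
\[ \chi(F, M/F) \;=\; k - m\cdot 0 + k(m-k)(1-g) \;=\; k\bigl(1 - (m-k)(g-1)\bigr) \]
for a short exact sequence $0 \to F \to M \to M/F \to 0$ with $\rank F = k$ and $\deg F = 0$. I would then note that since $g \geq 4$ and $m-k \geq 1$, we have $(m-k)(g-1) \geq 3 > 1$, so the expected dimension is strictly negative for every $k \in \{1, \ldots, m-1\}$. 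Consequently the corresponding Quot scheme is empty for generic $M$, and no proper subbundle of degree $0$ exists.

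I do not expect any genuine obstacle here: the proof is a direct parallel of Proposition \ref{vminusone}, and all the needed inequalities are elementary once the correct expected dimension is in hand. The only slightly subtle point is the first paragraph, where one must be careful to observe that stability alone delivers $\deg F \leq 0$ but not $\deg F < 0$, so that the strict inequality really does require the genericity input from Russo--Teixidor. The edge case $m = 1$ is vacuous, since then $M$ has no proper subbundles at all.
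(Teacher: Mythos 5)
Your proof is correct and is exactly what the paper intends: its own proof of this proposition is literally ``Similar to Proposition \ref{vminusone}'', and you have carried out that parallel faithfully, reducing to degree-zero subbundles via stability and then applying \cite[Theorem 0.2]{RT} with the correct expected dimension $k - k(m-k)(g-1) < 0$ for $g \geq 4$.
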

\begin{proof} Similar to Proposition \ref{vminusone}. \end{proof}
%The details:
%By stability, the slope of any subbundle of rank $r < m$ is less than $\frac{1}{m}$, and so we need only exclude subbundles of degree zero. By Russo--Teixidor i Bigas \cite[Theorem 0.2]{RT}, the expected dimension of the Quot scheme of subsheaves of degree zero and rank $r$ in $M$ is given by
%\[ r(r-m)(g-1) + r \cdot 1 - m \cdot 0 = r \left( (r-m)(g-1) + 1 \right) \]
%when this is nonnegative. We check that it is negative. This would follow from
%\[ (r-m)(g-1) < -1 \]
%This follows from the inequalities $r < m$ and $g \geq 4$.

\begin{theorem} A generic extension $0 \to E \to W \to M \to 0$ is a stable vector bundle. \label{Wstable} \end{theorem}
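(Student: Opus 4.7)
The plan is to suppose, for contradiction, that $W$ admits a proper subbundle $F \subsetneq W$ with $\deg F \geq 0$, and to derive a closed condition on the extension class $\delta(W) \in \mathrm{Ext}^1(M, E)$. Writing $F_1 := F \cap E \subseteq E$ and $F_2 := F/F_1 \hookrightarrow M$, I would first combine Lemma~\ref{SegreE} (applied to the saturation of $F_1$ in $E$) with Proposition~\ref{SegreM} (applied to the saturation of $F_2$ in $M$) to obtain $\deg F_1 \leq -1$ unless $F_1 \in \{0, E\}$, and $\deg F_2 \leq 1$ with equality iff $F_2 = M$, while $\deg F_2 \leq -1$ whenever $F_2$ has generic rank less than $m$. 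Together with $\deg F \geq 0$ and $F \neq W$, this leaves three configurations: (I) $F_1 = 0$, $F_2 = M$, so that $W$ splits; (II) $F_1 = 0$ and $F_2 \hookrightarrow M$ is an elementary transformation of length one; (III) $F_1$ is a proper nonzero saturated subbundle of $E$ of degree $-1$, with $F_2 = M$.

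Case (I) is ruled out by $\delta(W) \neq 0$. In cases (II) and (III), the existence of $F$ is equivalent to the vanishing of the image of $\delta(W)$ in $\mathrm{Ext}^1(F_2, E)$ (respectively, in $\mathrm{Ext}^1(M, E/F_1)$). Thus for each $(F_1, F_2)$ the destabilizing classes form a linear subspace of $\mathrm{Ext}^1(M, E)$, and it suffices to show that the union of these subspaces, indexed by the parameter space of admissible $(F_1, F_2)$, has dimension strictly less than $h^1(\Hom(M, E))$.

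To compute kernel dimensions I will use $\Hom(M, E) = 0$, which follows from $\mu(M) > \mu(E)$ together with the stability of both $M$ and $E$ (the stability of $E$ being a consequence of Lemma~\ref{SegreE}). For (II), length-one elementary transformations of $M$ form an $m$-dimensional family, and each kernel has dimension at most $\rank E = tn + 1$, since it factors through $\mathrm{Ext}^1(\C_x, E) \cong E|_x$. For (III), Corollary~\ref{mosubbsE} parametrizes the degree $-1$ subbundles of $E$ by $\bigsqcup_{s=0}^{t-1} \Gr(\C^t, s)$, and the codimension of each kernel is at least $-\chi(\Hom(M, E/F_1)) = (t-s)n[1 + m(g-1)]$. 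A Riemann--Roch comparison with $h^1(\Hom(M, E))$, using $g \geq 4$ and $t < n(g-1)$, then shows both destabilizing loci are proper subvarieties, so a generic extension class avoids them, giving the required contradiction. The main subtlety lies in the saturation bookkeeping in the case analysis; the dimension estimates themselves are comfortably satisfied.
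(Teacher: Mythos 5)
Your proposal is correct and follows essentially the same route as the paper: the same case analysis on $F\cap E$ (splitting, a length-one elementary transformation of $M$, or a degree $-1$ subbundle of $E$ classified by Corollary~\ref{mosubbsE}), followed by the same dimension counts in $H^1(\Hom(M,E))$. The only cosmetic difference is in the elementary-transformation case, where you bound the union of the images of $\mathrm{Ext}^1(\C_x,E)\cong E|_x$ directly rather than invoking Lemma~\ref{etlift} on the rank-one tensor locus $\Delta_{M^*\otimes E}$; this is the same locus and the same estimate.
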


\begin{proof} Suppose $G \subset W$ is a proper subbundle. If $G$ is contained in the subbundle $E$ then $\deg G \leq -1$ by Lemma \ref{SegreE}. Otherwise, we have a diagram
\[ \xymatrix{ 0 \ar[r] & E \ar[r] & W \ar[r] & M \ar[r] & 0 \\
0 \ar[r] & F \ar[r] \ar[u] & G \ar[r] \ar[u] & H \ar[r] \ar[u] & 0 } \]
where $F$ is a subbundle of $E$ and $H$ a subsheaf of $M$. By Proposition \ref{SegreM}, it suffices to exclude liftings of the following types to $W$:
\begin{enumerate}
\renewcommand{\labelenumi}{(\roman{enumi})}
\item extensions $0 \to F \to G \to M \to 0$ where $F \subset E$ has degree $-1$; and %and rank $1 + sn \geq 1$; and
\item degree zero elementary transformations of $M$.
\end{enumerate}

(i) Suppose $F \subset E$ is a proper subbundle of degree $-1$. By Corollary \ref{mosubbsE} (1), we have $\rank F = sn + 1$ for some $0 \leq s < t$. Then an extension $G$ of $M$ by $F$ belongs to $W$ if and only if $\delta(W)$ belongs to
\[ \Image \left( H^{1}(\Hom(M, F)) \to H^{1}(\Hom(M, E)) \right). \]
By Corollary \ref{mosubbsE} (2), %the set of all such $F$ is parametrized by a variety of dimension $s(t-s)$
to exclude case (i) in general, it will suffice to show that
\[ h^{1}(\Hom(M, E)) - h^{1}(\Hom(M, F)) - \dim \Gr(\C^t, s) > 0. \]
A straightforward calculation using Riemann--Roch shows that this would follow from $((m(g-1) + 1)n - s)(t - s) > 0$. 
%The details: By stability, there are no nonzero maps $M \to E$. Thus we have
%\begin{align*} h^1 (\Hom(M, E)) &= - \chi(M^* \otimes E) \\
% &= - \left( (-1) \cdot (1+tn) + (-1) \cdot m - (1+tn)m(g-1) \right) \\
% &= 1 + tn + m + (1 + tn)m(g-1) \\
% &= (1 + tn)(m(g - 1) + 1) + m \end{align*}
%In the same way, if $F \subset E$ is a subbundle of degree $-1$ and rank $1 + sn \geq 1$, then
%\begin{align*} h^1 (\Hom(M, F)) &= - \left( (-1) \cdot (1+sn) + (-1) \cdot m - (1+sn)m(g-1) \right) \\
% &= 1 + sn + r + (1 + sn)m(g-1) \\
% &= (1 + sn)(m(g - 1) + 1) + m \end{align*}
%Furthermore, $\dim \Gr ( \C^t, s) = s(t-s)$. Then
%\begin{multline*} h^{1}(\Hom(M, E)) - h^{1}(\Hom(M, F)) - \dim \Gr(\C^t , s) \\
%=(1 + tn)(m(g - 1) + 1) + m - ((1 + sn)(m(g - 1) + 1) + m) - s(t-s) \\
%= (m(g-1) + 1)(1 + tn - 1 - sn) - s(t-s) \\
%= (m(g-1) + 1)n(t - s) - s(t-s) \\
%= ((m(g-1) + 1)n - s)(t - s) \end{multline*}
Since $t > s$, we have $s < t < n(g-1)$ by (\ref{tng}), and then clearly $((m(g-1) + 1)n - s)(t - s) > 0$ as desired.

(ii) By Theorem \ref{etlift}, a degree 0 elementary transformation of $M$ lifts to $W$ only if the extension class of $W$ belongs to $\psi \left( \Delta_{M^* \otimes E} \right)$ in $\PP H^1 (\Hom(M, E))$. We have
\begin{multline*} \dim \psi \left( \Delta_{M^* \otimes E} \right) \ \leq \ m + tn \\ < \ m + tn + m(tn+1)(g-1) \ = \ \dim \PP H^1 (\Hom(M, E)), \end{multline*}
so a general extension $W$ admits no such lifting.
%The details: By Theorem \ref{etlift}, a degree 0 elementary transformation of $M$ lifts to $W$ only if the extension class of $W$ belongs to the image of $\Delta_{M^* \otimes E}$ in $\PP H^1 (\Hom(M, E))$. This has dimension at most %%\[ \rank M - 1 + \rank E - 1 + 1 = m - 1 + tn + 1 - 1 + 1 = m + tn \]
%On the other hand,
%\[ \dim \PP H^1 (\Hom(M, E)) = tn+m + (1+tn)m(g-1). \]
%%The details:
%%\[ -\chi(M^* \otimes E) = - \left( -1 \cdot (1 + tn) - 1 \cdot m - (1 + tn)m(g-1) \right) = 1+tn+m + (1+tn)m(g-1) - 1 = tn+m + (1+tn)m(g-1). \]
%Since the last term is positive, we have $\dim \Delta_{M^* \otimes E} < \dim \PP H^1 (\Hom(M, E))$, and so a generic extension class $\delta(W)$ does not belong to the image of $\PP \Hom(M, E)$.
\end{proof}

Now we study theta divisors of such extensions $W$. Suppose $t \geq 2$, and let $V$, $L$, $E$ and $M$ be as above. Since $V$ is generic, we may assume $V$ has a reduced theta divisor $\Theta_V$.

\begin{theorem} \label{main} A generic extension $0 \to E \to W \to M \to 0$ has a reducible theta divisor $\Theta_W$ which scheme-theoretically contains $(t-1)\Theta_V$. In particular, if $t \geq 3$ then $\Theta_W$ is also nonreduced. \end{theorem}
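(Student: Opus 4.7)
The plan is to combine Laszlo's Riemann singularity theorem (Theorem \ref{Riemann}) with a direct lower bound on $h^{0}(N \otimes W)$ for $N \in \Theta_V$, and then compare algebraic equivalence classes to extract reducibility.

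First, I bound $h^{0}(N \otimes W)$ from below for $N$ generic in $\Theta_V$. Twisting the two defining sequences
\[ 0 \to L \to E \to V \otimes \C^{t} \to 0 \quad \text{and} \quad 0 \to E \to W \to M \to 0 \]
by $N$, I use genericity of $V$ (so $\Theta_V$ is reduced, giving $h^{0}(N \otimes V) = 1$ at a general point of $\Theta_V$) together with Riemann--Roch for $N \otimes L$ of degree $g - 2$ (which yields $h^{1}(N \otimes L) - h^{0}(N \otimes L) = 1$). The long exact sequence of the first twist gives $h^{0}(N \otimes E) \geq t - 1$, and the second sequence then produces $h^{0}(N \otimes W) \geq h^{0}(N \otimes E) \geq t - 1$. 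By Theorem \ref{Riemann} this translates into $\mult_{N} \Theta_{W} \geq t - 1$ at every generic point of $\Theta_V$.

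Next, I verify that $\Theta_W$ is a proper divisor in $J$. I pick a generic $N \in J$, avoiding $\Theta_V$ and the natural translates associated with $L$ and $M$, so that $h^{0}(N \otimes V) = 0$, $h^{0}(N \otimes L) = 0$, and $h^{0}(N \otimes M) = 1$. This forces $h^{0}(N \otimes E) = 0$ and $h^{0}(N \otimes W) \leq 1$, with equality precisely when the connecting map $\delta \colon H^{0}(N \otimes M) \to H^{1}(N \otimes E)$ vanishes. Since $\delta$ is cup product with the extension class, for a generic element of $H^{1}(\Hom(M,E))$ the map is nonzero, so by semicontinuity $\Theta_W \ne J$ for the general $W$ in our family.

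With these two ingredients, $\Theta_V \subseteq \Supp(\Theta_W)$ (since $t \geq 2$ gives $h^{0}(N \otimes W) \geq 1$ on $\Theta_V$), and since $\Theta_V$ is an irreducible reduced divisor the generic-point multiplicity estimate upgrades to the divisorial inequality $\Theta_W \geq (t-1) \Theta_V$. Reducibility then follows from a numerical comparison: $\Theta_W$ is algebraically equivalent to $r \Theta$ with $r = tn + 1 + m$, while $(t-1) \Theta_V \equiv (t-1) n \Theta$, and $r > (t-1) n$ forces the residual divisor $\Theta_W - (t-1) \Theta_V$ to be nonzero and effective. For $t \geq 3$ the coefficient $t - 1 \geq 2$ automatically makes $\Theta_W$ nonreduced. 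The main obstacle is the existence step: ruling out Raynaud-type pathology for the generic extension class requires controlling the cup-product pairing governing $\delta$, as Theorem \ref{Wstable} alone does not guarantee $\Theta_W \ne J$; the remaining assertions are formal once the multiplicity bound is in hand.
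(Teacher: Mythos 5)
Your multiplicity bound and your argument that $\Theta_W$ is actually a divisor follow the paper's proof almost verbatim: the same two twisted sequences give $h^0(N\otimes E)\geq t-1$ on $\Theta_V$, Laszlo's theorem converts this into $\mult_N\Theta_W\geq t-1$, and properness of $\Theta_W$ is reduced to the non-vanishing of $\cdot\cup\delta(W)\colon H^0(N\otimes M)\to H^1(N\otimes E)$ for generic $\delta(W)$. You assert this non-vanishing rather than prove it; the missing (short) justification is that $N^{-1}\hookrightarrow M$ is a bundle injection, so $H^1(\Hom(M,E))\to H^1(\Hom(N^{-1},E))=H^1(N\otimes E)$ is surjective, and $h^1(N\otimes E)=1\neq 0$ by Riemann--Roch; hence the pairing is not identically zero and a generic class does not lie in its kernel. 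That gap is fillable in one line.

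The genuine gap is in your reducibility step. The comparison of algebraic equivalence classes shows only that $R_W:=\Theta_W-(t-1)\Theta_V$ is a nonzero effective divisor, i.e.\ that $\Theta_W\neq (t-1)\Theta_V$. It does not show that $\Theta_W$ has an irreducible component distinct from $\Theta_V$: a priori $R_W$ could be supported entirely on $\Theta_V$, e.g.\ for $n=1$, $t=2$, $m=1$ one has $\Theta_W\equiv 4\Theta$ and nothing numerical excludes $\Theta_W=4\Theta_V$, which is nonreduced but irreducible as a set. Since the paper (and its corollaries, e.g.\ the rank $4$, $t=2$ case) uses ``reducible'' to mean that the support has at least two distinct components, as opposed to merely ``nonreduced'', the numerical argument is insufficient. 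The paper closes this by producing a line bundle $P$ with $h^0(P\otimes W)>0$ but $h^0(P\otimes E)=0$; since $h^0(P\otimes E)\geq t-1\geq 1$ everywhere on $\Theta_V$, such a $P$ lies in $\Supp\Theta_W\setminus\Theta_V$, forcing a second component. (Equivalently, one can use the geometric description of $R_W$ as $\mu(J)\cap H_W$ in \S\ref{resdiv} to see that $R_W$ moves with $\delta(W)$ and so cannot always sit on $\Theta_V$.) Your proof needs some version of this pointwise argument; the class computation alone cannot deliver it.
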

\begin{proof}
For $P \in \Theta_V$, consider the exact sequence
\[ 0 \to P \otimes L \to P \otimes E \to (P \otimes V) \otimes \C^t \to 0. \]
Since $L$ and $V$ are generic, we may assume for generic $P \in \Theta_V$ that $h^0 (P \otimes L) = 0$. As $\Theta_V$ is reduced, furthermore $h^0 (P \otimes V) = 1$ for generic $P \in \Theta_V$ by Theorem \ref{Riemann}. 
%We have $\chi(P \otimes L) = -1 + g-1 - (g-1) = -1$. By Hirschowitz's Lemma, $P \otimes L$ is nonspecial, so $h^0 (P \otimes L) = 0$. Furthermore, $\chi(P \otimes V) = n(g-1) + 1 \cdot 0 - n(g-1) = 0$. Here we assume that the theta divisor of $V$ is reduced.
Taking global sections, we obtain
\[ 0 \to H^0 (P \otimes E) \to H^0 (P \otimes V) \otimes \C^t \to H^1 (P \otimes L) \to \cdots \]
By Riemann--Roch, $h^1 (P \otimes L) = 1$. 
%\[ -\chi(P \otimes L) = - \left( -1 + g - 1 - (g-1)) = -(-1) = 1 \]
By exactness, $h^0 (P \otimes E) \geq t - 1$ for all $P \in \Theta_V$. Since $E \subset W$, we have $h^0 (P \otimes W) \geq t-1$ for all $P \in \Theta_V$. Thus by Theorem \ref{Riemann}, the theta divisor $\Theta_W$, if defined, has multiplicity at least $t-1$ at all $P \in \Theta_V$, and thus is of the form $(t-1)\Theta_V + R_W$ where $R_W$ is a divisor on $J$ algebraically equivalent to $(n+1+m)\Theta$.
%1+tn + m - (t-1)n = 1 + tn + m - tn + n = 1 + n + m

Now we check that the theta divisor of a generic such $W$ is defined. Since $L$ is generic of degree $-1$, we have $h^0 (N \otimes L) = 0$ for all $N \in J$ outside a locus of codimension at least 2. Therefore, for generic $N \in J \backslash \Supp \, \Theta_V$, any map $N^{-1} \to W$ must lift from a map $N^{-1} \to M$. By Hirschowitz's Lemma \cite[Theorem 1.2]{RT} and Riemann--Roch, we have moreover $h^0 (N \otimes M) = 1$.

As $N^{-1}$ is a subsheaf of $M$, the induced map $H^1 (\Hom(M, E)) \to H^1 (\Hom( N^{-1}, E))$ is surjective. Since the latter space is nonzero, the map $N^{-1} \to M$ does not lift to a generic extension $0 \to E \to W \to M \to 0$. In particular, a generic such $W$ has a well-defined theta divisor.

As for reducibility: it is easy to find an extension $W$ satisfying $h^0 (P \otimes W) > 0$ for at least one $P$ with $h^0 (P \otimes E) = 0$. For such a $W$ the divisor $\Theta_W$ also has a component containing $P$, hence distinct from $\Theta_V$, and so is reducible. \end{proof}

%We summarize our results as follows:
In particular, we have:

\begin{corollary} \begin{enumerate}
\renewcommand{\labelenumi}{(\arabic{enumi})}
\item Let $X$ be a curve of genus $g \geq 5$. %Let $\Theta_V$ be the theta divisor of a generic stable bundle $V$ of rank $n \geq 1$. Let $t$ be a positive integer with $t < n(g-1)$. Then for any rank $tn + 1 + m \geq tn + 2$, there exists a stable bundle $W$ of rank $tn + 1 + m$ such that $\Theta_W$ exists and contains $(t-1) \Theta_V$ as a subscheme. In particular,
There exist stable bundles of rank $r$ over $X$ for all $r \geq 4$ (resp., $\geq 5$) with reducible (resp., reducible and nonreduced) theta divisors.
\item If $g = 4$ then there exist stable bundles of rank $r$ over $X$ for all $r \geq 4$ (resp., $r \geq 8$) with reducible (resp., reducible and nonreduced) theta divisors.
\end{enumerate} \end{corollary}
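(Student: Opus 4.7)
The plan is to deduce both parts directly from Theorem \ref{main}, which produces, for each triple $(n,t,m)$ with $n \geq 1$, $m \geq 1$ and $2 \leq t < n(g-1)$, a stable bundle $W$ of rank $r = tn + 1 + m$ whose theta divisor $\Theta_W$ is reducible and scheme-theoretically contains $(t-1)\Theta_V$, and is moreover nonreduced as soon as $t \geq 3$. So in each case I would pick admissible $n$ and $t$ realizing the desired phenomenon and let $m$ run through all positive integers, thereby sweeping out all sufficiently large ranks in arithmetic progression of common difference $1$.

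For part (1), assuming $g \geq 5$, I would obtain a reducible $\Theta_W$ in every rank $r \geq 4$ by taking $n = 1$ and $t = 2$: condition (\ref{tng}) reduces to $2 < g - 1$, which holds, and $r = 3 + m$ covers all integers $\geq 4$. For the reducible and nonreduced case I would take $n = 1$ and $t = 3$, so that (\ref{tng}) reads $3 < g - 1$, exactly the hypothesis $g \geq 5$; then $r = 4 + m$ covers all $r \geq 5$.

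For part (2), with $g = 4$, the reducibility claim proceeds verbatim with $n = 1$ and $t = 2$, since $2 < 3 = n(g-1)$. However, nonreducedness requires $t \geq 3$, and for $n = 1$ this would force $3 < g - 1 = 3$, which fails. The remedy is to raise $n$ to $2$: then (\ref{tng}) becomes $t < 6$, so $t = 3$ is admissible, and the rank of $W$ becomes $r = 2 \cdot 3 + 1 + m = 7 + m$, covering every integer $\geq 8$.

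The main obstacle has already been handled in the proof of Theorem \ref{main}; what remains here is purely the arithmetic of which pairs $(n,t)$ are admissible in each genus. The one nontrivial observation is that in genus $g = 4$ the jump from $n = 1$ to $n = 2$ forced by the strict inequality $t < n(g-1)$ is precisely what pushes the minimum rank for a reducible and nonreduced example from $5$ up to $8$. I would also note in passing that for $n = 1$ the input bundle $V$ is a generic line bundle of degree zero, whose theta divisor is the classical (reduced) theta divisor on $J$, so the hypothesis in Theorem \ref{main} that $V$ has reduced $\Theta_V$ is automatic.
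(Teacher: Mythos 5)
Your proposal is correct and follows essentially the same route as the paper: both proofs simply instantiate Theorem \ref{main} with $(n,t)=(1,2)$ and $(1,3)$ for $g\geq 5$, and pass to $(n,t)=(2,3)$ in genus $4$ to recover nonreducedness at the cost of raising the minimal rank to $8$. Your arithmetic of the ranks $r = tn+1+m$ and of the admissibility condition $t < n(g-1)$ matches the paper's argument exactly.
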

\begin{proof} If $g \geq 5$ then we may set $n = 1$ and $t = 2$ or $3$ and $m$ arbitrary. This gives (1).

As for (2): The only place above where we use the fact that $g \neq 4$ is to ensure that we can choose $t$ with $3 \leq t < n(g-1)$ when $n = 1$. Thus if $g = 4$, we can find stable bundles of rank $\geq 4$ with reducible theta divisors as in case (1).

If we set $n = 2$ and $t = 3$ and $m$ arbitrary, then the construction gives stable bundles of rank $\geq 8$ with reducible and nonreduced theta divisors. \end{proof}

\subsection{The residual divisor $R_W$} \label{resdiv} Consider a generic $W$ with extension class $\delta(W)$ and theta divisor $(t-1)\Theta_V + R_W$. As in \cite[\S 5]{H3}, we can give a geometric description of $R_W$ as follows:

Let $N$ be a generic line bundle of degree $g-1$ satisfying $h^0 (N \otimes E) = 0$ and $h^0 (N \otimes M) = 1$. Then by Riemann--Roch,
\[ \Image \left( m_N \colon H^0 (\Kx N^{-1} \otimes E^*) \otimes H^0 (N \otimes M) \to H^0 (\Kx \otimes E^* \otimes M) \right) \]
is of dimension 1. The association $N \mapsto \Image\,m_N$ defines a rational map
\[ \mu \colon J \dashrightarrow \PP H^0 (\Kx \otimes E^* \otimes M) \cong \PP H^1 (M^* \otimes E)^* . \]
Since $m_N$ is injective, $N$ belongs to the indeterminacy locus of $\mu$ if and only if $h^0 (N \otimes E) > 0$ or $h^0(N \otimes M) > 1$.

We write $R'_W$ for the complement of the indeterminacy locus in $R_W$, and $H_W$ for the hyperplane defined by $\delta(W)$ on $\PP H^0 (\Kx \otimes E^* \otimes M)$.
%%if and only if $h^0 (N \otimes W) > 0$ for all extensions $W$ of $M$ by $E$ The map $\mu$ has nonempty indeterminacy locus, as it fails to be defined at those $N$ with $h^0 (N \otimes E) > 0$ or $h^0(N \otimes M) > 1$. It is easy to see that either of these conditions implies that $h^0 (N \otimes W) > 0$ for \emph{all} extensions $W$ of $M$ by $E$.
%%%If $h^0 (N \otimes E) > 0$ then obviously $h^0(N \otimes W) > 0$ for all $W$. If $h^0 (N \otimes E) = 0$ then $h^1 (N \otimes E) = 1$. If $h^0 (N \otimes M) > 1$ then the map $\cdot \cup \delta(W) \colon H^0 (N \otimes M) \to H^1 (N \otimes E)$ cannot be injective, so $h^0 (N \otimes W) > 0$ by exactness.

\begin{lemma} The set-theoretic intersection of $H_W$ and $\mu(J)$ is exactly $R'_W$. \label{resdivgeom} \end{lemma}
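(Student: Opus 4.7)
The plan is to translate both sides of the asserted equality into the same lifting condition on the unique nonzero map $s_N \colon N^{-1} \to M$, and then match them via Serre duality.

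First, I would fix $N \in J$ outside the indeterminacy locus of $\mu$, so that $h^0(N \otimes E) = 0$ and $h^0(N \otimes M) = 1$; let $s_N$ denote the resulting (unique up to scalar) map $N^{-1} \to M$. From the sequence $0 \to N \otimes E \to N \otimes W \to N \otimes M \to 0$ together with $h^0(N \otimes E) = 0$, a nonzero section of $N \otimes W$ exists iff $s_N$ lifts to a map $N^{-1} \to W$. I would also check, using $0 \to N \otimes L \to N \otimes E \to (N \otimes V) \otimes \C^t \to 0$ and $h^1(N \otimes L) = 1$, that $h^0(N \otimes E) = 0$ forces $t \cdot h^0(N \otimes V) \leq 1$, so $N \notin \Theta_V$ since $t \geq 2$. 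Consequently, on the complement of the indeterminacy locus the supports of $\Theta_W$ and $R_W$ coincide, and $N \in R'_W$ is equivalent to $s_N$ admitting a lift to $W$.

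Next, the obstruction to lifting $s_N$ is the image of $\delta(W) \in H^1(M^* \otimes E)$ under the map $H^1(M^* \otimes E) \to H^1(N \otimes E)$ induced by composition with $s_N$. Under Serre duality $H^1(M^* \otimes E)^* \cong H^0(\Kx \otimes M \otimes E^*)$, this map is dual to multiplication by $s_N$,
\[ H^0(\Kx N^{-1} \otimes E^*) \xrightarrow{\cdot\, s_N} H^0(\Kx \otimes M \otimes E^*). \]
Since $m_N$ is injective and both factors are one-dimensional under our hypothesis on $N$, the image of this multiplication is exactly the one-dimensional subspace representing $\mu(N)$. Hence the obstruction vanishes iff $\delta(W)$ annihilates this line, which is by definition the condition $\mu(N) \in H_W$.

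Combining the two equivalences, $N \in R'_W$ iff $\mu(N) \in H_W$ for every $N$ off the indeterminacy locus, which is the desired set-theoretic identification. The main step requiring care is the Serre-duality matching: one has to verify that pullback of an extension class along $s_N$ and multiplication by $s_N$ on dual global sections are genuinely Serre dual, so that the single hyperplane $H_W \subset \PP H^0(\Kx \otimes M \otimes E^*)$ simultaneously encodes the lifting obstruction and cuts out the incidence locus with $\mu(J)$. Once this compatibility is in hand, the remainder is bookkeeping on the long exact sequences.
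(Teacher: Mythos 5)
Your proposal is correct and follows essentially the same route as the paper: it identifies $N \in R'_W$ with the vanishing of the cup product of the unique section of $N \otimes M$ with $\delta(W)$, and then uses the Serre-duality statement that this cup-product map is dual to the multiplication map $m_N$, so that vanishing is equivalent to $\mu(N) \in H_W$. The only addition is your explicit check that $h^0(N \otimes E) = 0$ forces $N \notin \Theta_V$, which the paper leaves implicit; this is a harmless and correct refinement.
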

\begin{proof} Suppose $N \in J$ lies outside the indeterminacy locus of $\mu$. We have
\[ 0 \to H^0 (N \otimes W) \to H^0 (N \otimes M) \xrightarrow{\cdot \cup \, \delta(W)} H^1 (N \otimes E) \to \cdots \]
Then $N \in R_W$ if and only if the (up to scalar) unique section $\sigma$ of $N \otimes M$ satisfies $\sigma \cup \delta(W) = 0$. By hypothesis, $h^0 (N \otimes M) = 1 = h^1 (N \otimes E)$, so this is equivalent to
\[ \cdot \cup \delta(W) = 0 \ \in \ \Hom \left( H^0 (N \otimes M), H^1 (N \otimes E) \right). \]
Now it is well known that via Serre duality, the cup product map
\[ H^1 (\Hom(M, E)) \to \Hom \left( H^0 (N \otimes M), H^1 (N \otimes E) \right) \] %= H^0 (N \otimes M)^* \otimes H^1 (N \otimes E) \]
is dual to $m_N$. Thus $h^0 (N \otimes W) > 0$ if and only if $m_N^* \delta(W) = 0$; in other words, $\mu(N) \in H_W$. 
%$\Image \, m_N$ belongs to the kernel of the linear form on $H^0 (\Kx \otimes E^* \otimes M)$ defined by $\delta(W)$.
The lemma follows. \end{proof}

\section{Symplectic extensions and liftings} \label{sympliftings}

Recall that a vector bundle $W$ is \textsl{symplectic} if there is an antisymmetric isomorphism $W \xrightarrow{\sim} W^*$; equivalently, if there exists a global bilinear nondegenerate antisymmetric form on $W$. In this section we gather some facts about such bundles.

\begin{criterion} \label{sympcrit} Let $E \to X$ be a simple vector bundle and $0 \to E \to W \to E^* \to 0$ an extension of class $\delta(W) \in H^1 (\Hom(E^*, E)) = H^1 (E \otimes E)$. Then $W$ carries a symplectic form with respect to which $E$ is isotropic if and only if $\delta(W)$ belongs to the subspace $H^1 (\Sym^2 E)$. \end{criterion}
\begin{proof} This is a special case of \cite[Criterion 2.1]{H1}. \end{proof}

\begin{proposition} \label{int} Let $E$ be any vector bundle and $F \subseteq E$ a subbundle. Then
\[ \left( F \otimes E \right) \cap \Sym^2 E = \Sym^2 F. \]
\end{proposition}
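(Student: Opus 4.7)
The plan is to reduce the statement to a pointwise (linear-algebra) assertion and then prove it by exploiting the swap involution on $E \otimes E$.

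Since $F \subseteq E$ is a subbundle, it is locally a direct summand, so $F \otimes E$, $\Sym^2 E$ and $\Sym^2 F$ are all subbundles of $E \otimes E$ whose fiber at $x \in X$ is obtained by the corresponding construction applied to $F_x \subseteq E_x$. Intersections of subbundles can therefore be computed fiberwise, and the claim reduces to the purely linear-algebraic statement that for a vector subspace $W \subseteq V$ one has $(W \otimes V) \cap \Sym^2 V = \Sym^2 W$ inside $V \otimes V$.

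The easy inclusion $\Sym^2 W \subseteq (W \otimes V) \cap \Sym^2 V$ is immediate from $\Sym^2 W \subseteq W \otimes W \subseteq W \otimes V$ and $\Sym^2 W \subseteq \Sym^2 V$. For the reverse direction, the plan is to use the involution $\tau \colon V \otimes V \to V \otimes V$ interchanging the two tensor factors. The fixed locus of $\tau$ is exactly $\Sym^2 V$, and $\tau$ sends $W \otimes V$ to $V \otimes W$. Hence if $\alpha \in (W \otimes V) \cap \Sym^2 V$ then $\alpha = \tau(\alpha) \in V \otimes W$, so $\alpha \in (W \otimes V) \cap (V \otimes W)$.

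It then remains to observe the two elementary identities $(W \otimes V) \cap (V \otimes W) = W \otimes W$ and $(W \otimes W) \cap \Sym^2 V = \Sym^2 W$, both of which are checked instantly by choosing a basis $e_1, \ldots, e_n$ of $V$ with $e_1, \ldots, e_k$ a basis of $W$ and inspecting coefficients. Combining these two identities yields $\alpha \in \Sym^2 W$, completing the proof. There is no real obstacle here; the only point requiring care is the observation that the intersection can be taken fiberwise, which holds because $F$ is locally a direct summand of $E$.
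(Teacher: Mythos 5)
Your proof is correct. It follows the same overall strategy as the paper (reduce to a fiberwise linear-algebra statement and exploit the symmetry of the tensor), but the finishing argument is genuinely different. The paper takes an element $\sum e_i \otimes f_i \in (F \otimes E)|_x \cap \Sym^2 E|_x$ written with a \emph{minimal number of summands}, equates it with $\sum f_i \otimes e_i$, and uses minimality of the tensor rank to conclude that the $e_i$ already lie in $F|_x$. You instead observe that the swap involution $\tau$ fixes the element and maps $W \otimes V$ to $V \otimes W$, so the element lies in $(W \otimes V) \cap (V \otimes W) = W \otimes W$, and then in $(W \otimes W) \cap \Sym^2 V = \Sym^2 W$; both identities are transparent in a basis of $V$ adapted to $W$. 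Your route is arguably cleaner: it avoids any discussion of minimal-length representations (where the precise consequence of minimality --- that the spans of the $e_i$ and the $f_i$ coincide --- requires a moment's care), replacing it with two intersections of coordinate subspaces that are verified by inspection. Your remark that the fiberwise reduction is legitimate because $F$ is locally a direct summand of $E$ is exactly the point the paper compresses into ``the question is local.''
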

\begin{proof} The question is local. For some $x \in X$, suppose
\[ \sum e_i \otimes f_i \ \in \ (E \otimes F)|_x \cap \Sym^2 E|_x, \]
where each $e_i \in E|_x$ and $f_i \in F|_x$. Furthermore, we assume that the sum is of minimal length (equal to the rank of the associated map $E^*|_x \to F|_x$). Then
\[ \sum e_i \otimes f_i = \sum f_i \otimes e_i \]
and so by minimality $e_i = f_{\rho(i)}$ for some permutation $\rho$ of the indices. Hence all the $e_i$ belong to $F|_x$. The proposition follows.
\end{proof}
Now let $E$ be a vector bundle and $F \subset E$ a subbundle, and write $G := E/F$. If $h^0( G \otimes E) = 0$, then using Proposition \ref{int} we find a diagram
\begin{equation} \xymatrix{ & & 0 \ar[d] & H^0 (\wedge^2 G) \ar[d] & \\
0 \ar[r] & H^1 ( \Sym^2 F ) \ar[r] \ar[d] & H^1 ( \Sym^2 E ) \ar[r]^{b} \ar[d] & H^1 \left( \frac{\Sym^2 E}{\Sym^2 F} \right) \ar[r] \ar[d] & 0 \\
0 \ar[r] & H^1 ( F \otimes E ) \ar[r] \ar[d] & H^1 ( E \otimes E ) \ar[r] \ar[d] & H^1 ( G \otimes E ) \ar[r] \ar[d] & 0 \\
H^0 (\wedge^2 G) \ar[r] & H^1 \left( \frac{F \otimes E}{\Sym^2 F} \right) \ar[r] \ar[d] & H^1 ( \wedge^2 E ) \ar[r] \ar[d] & H^1 ( \wedge^2 G ) \ar[r] \ar[d] & 0 \\
 & 0 & 0 & 0 & } \label{cohomdiag} \end{equation}
%Firstly, check the ranks match up. Write $e := \rank E$ and $f := \rank F$, and then $\rank G = e - f$. We need to check that $\rank \frac{F \otimes E}{\Sym^2 F} + \rank \wedge^2 G = \rank \wedge^2 E$:
%\[ ef - \frac{f(f+1)}{2} + \frac{(e-f)(e-f-1)}{2} = \frac{e(e-1)}{2} \]
%Also we need to check that $\rank \frac{\Sym^2 E}{\Sym^2 F} + \rank \wedge^2 G = \rank G \otimes E$:
%\[ \frac{e(e+1)}{2} - \frac{f(f+1)}{2} + \frac{(e-f)(e-f-1)}{2} = (e-f)e. \]
%These are easy to verify.
%As for the cohomology diagram:
%The vanishing of $H^0 \left( \frac{\Sym^2 E}{\Sym^2 F} \right)$ follows from the hypothesis $h^0 (G \otimes E) = 0$.
%\begin{equation} \xymatrix{ & 0 \ar[d] & 0 \ar[d] & 0 \ar[d] & \\
%0 \ar[r] & \Sym^2 F \ar[r] \ar[d] & \Sym^2 E \ar[r] \ar[d] & \frac{\Sym^2 E}{\Sym^2 F} \ar[r] \ar[d] & 0 \\
%0 \ar[r] & F \otimes E \ar[r] \ar[d] & E \otimes E \ar[r] \ar[d] & G \otimes E \ar[r] \ar[d] & 0 \\
%0 \ar[r] & \frac{F \otimes E}{\Sym^2 F} \ar[r] \ar[d] & \wedge^2 E \ar[r] \ar[d] & \wedge^2 G \ar[r] \ar[d] & 0 \\
% & 0 & 0 & 0 & } \label{vbdiag} \end{equation}

\begin{lemma} \label{symplifting} Let $E$, $F$ and $G$ be as above. Then the subbundle $G^* \subseteq E^*$ lifts to $W$ if and only if $\delta(W)$ belongs to the preimage of $H^0 (\wedge^2 G)$ via the map $b$ in (\ref{cohomdiag}). 
%\[ H^1 (\Sym^2 E) \to H^1 \left( \frac{\Sym^2 E}{\Sym^2 F} \right) \]
%in (\ref{cohomdiag}). 
In particular, if $h^0 (\wedge^2 G) = 0$ then $G^*$ lifts to $W$ if and only if $\delta(W) \in H^1 (\Sym^2 F)$. \end{lemma}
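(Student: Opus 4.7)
The plan is to reduce the claim to a short diagram chase in (\ref{cohomdiag}). The starting point is the standard obstruction to lifting a subsheaf across an extension: the inclusion $\iota\colon G^*\hookrightarrow E^*$ lifts to a subsheaf of $W$ if and only if the pullback extension $0\to E\to \iota^*W\to G^*\to 0$ splits, equivalently if and only if the image of $\delta(W)$ under the natural map
\[
H^1(\Hom(E^*,E))\ =\ H^1(E\otimes E)\ \lra\ H^1(\Hom(G^*,E))\ =\ H^1(G\otimes E)
\]
vanishes.

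By Criterion \ref{sympcrit} we already know $\delta(W)\in H^1(\Sym^2 E)\subseteq H^1(E\otimes E)$, so I would next feed $\delta(W)$ into (\ref{cohomdiag}). Functoriality of $H^1$ applied to the middle two columns produces a commutative square
\[
\xymatrix{
H^1(\Sym^2 E) \ar[r]^-{b} \ar[d] & H^1(\Sym^2 E/\Sym^2 F) \ar[d] \\
H^1(E\otimes E) \ar[r] & H^1(G\otimes E),
}
\]
so the image of $\delta(W)$ in $H^1(G\otimes E)$ equals the image of $b(\delta(W))$ under the right-hand vertical arrow. The third column of (\ref{cohomdiag}), which is the long exact sequence of $0\to\Sym^2 E/\Sym^2 F\to G\otimes E\to \wedge^2 G\to 0$ and which begins with $H^0(\wedge^2 G)$ because $h^0(G\otimes E)=0$, shows that
\[
\Ker\!\bigl( H^1(\Sym^2 E/\Sym^2 F)\to H^1(G\otimes E) \bigr)\ =\ H^0(\wedge^2 G).
\]
Combining the two previous steps, $G^*$ lifts to $W$ precisely when $b(\delta(W))\in H^0(\wedge^2 G)$, i.e.\ when $\delta(W)$ lies in the preimage of $H^0(\wedge^2 G)$ under $b$; this is the first statement.

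For the "in particular" part, the hypothesis $h^0(\wedge^2 G)=0$ reduces the condition to $b(\delta(W))=0$, and exactness of the top row of (\ref{cohomdiag}) identifies $\Ker(b)$ with $H^1(\Sym^2 F)$. The whole argument is essentially bookkeeping, so I do not anticipate any serious technical difficulty; the only point that needs a moment's thought is the commutativity of the square above, which is the naturality of the symmetric/alternating decomposition with respect to the inclusion $F\subseteq E$ and is built into the construction of (\ref{cohomdiag}).
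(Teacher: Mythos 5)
Your argument is correct and follows essentially the same route as the paper: both start from the standard criterion that $G^*$ lifts iff $\delta(W)$ dies in $H^1(G\otimes E)$, then use the commutativity of (\ref{cohomdiag}) together with the identification $\Ker\bigl(H^1(\Sym^2 E/\Sym^2 F)\to H^1(G\otimes E)\bigr)=H^0(\wedge^2 G)$ (valid because $h^0(G\otimes E)=0$) to translate this into the preimage condition under $b$. The only difference is presentational — you track the image of $\delta(W)$ while the paper describes the subspace $H^1(F\otimes E)\cap H^1(\Sym^2 E)$ — and the content is identical.
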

\begin{proof} It is well known that $G^*$ lifts to $W$ if and only if $\delta(W)$ belongs to
\[ \Ker \left( H^1 (E \otimes E) \to H^1 ( G \otimes E ) \right) = H^1 (F \otimes E). \]
Thus we must describe $H^1(F \otimes E) \cap H^1 (\Sym^2 E)$. In (\ref{cohomdiag}), we have
\[ H^1(F \otimes E) \cap H^1( \Sym^2 E ) = \Ker \left( H^1 (\Sym^2 E) \to H^1(E \otimes E) \to H^1 (G \otimes E) \right). \]
By commutativity, this coincides with
\[ \Ker \left( H^1 (\Sym^2 E) \xrightarrow{b} H^1 \left( \frac{\Sym^2 E}{\Sym^2 F} \right) \to H^1 ( G \otimes E) \right). \]
Thus $H^1 (F \otimes E) \cap H^1 (\Sym^2 E)$ is the preimage of $H^0 (\wedge^2 G)$ by $b$. The lemma follows. \end{proof}

\section{Symplectic bundles with reducible and nonreduced theta divisors} \label{sympexamples}

Here we adapt the construction of \S \ref{mainexs} to produce stable symplectic bundles with reducible and nonreduced theta divisors. As before, suppose $X$ has genus $g \geq 4$. Let $L$ be a line bundle of degree $-1$ and $V$ a generic stable bundle of rank $n \geq 1$ and degree zero. Let $E$ be a generic extension $0 \to L \to E \to V \otimes \C^t \to 0$, where $t < n(g-1)$.

\begin{lemma} A generic symplectic extension $0 \to E \to W \to E^* \to 0$ is a stable vector bundle. \label{sostability} \end{lemma}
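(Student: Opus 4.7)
The plan is to mirror the proof of Theorem \ref{Wstable}, replacing $M$ by $E^*$ and imposing the symplectic constraint $\delta(W) \in H^1(\Sym^2 E)$ via Lemma \ref{symplifting}. The new wrinkle is that $E^*$ is \emph{not} stable: dualising Corollary \ref{mosubbsE}, every proper saturated subbundle of $E^*$ of nonnegative degree has degree $0$ and is of the form $H_\Sigma := (V \otimes \Sigma)^*$ for $\Sigma = \C^t/\Lambda$, $\Lambda \in \Gr(\C^t, s)$ with $s \in \{0,\ldots,t-1\}$; the only saturated subbundle of degree $\geq 1$ is $E^*$ itself.

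Given a proper saturated subbundle $G \subset W$ with $G \not\subseteq E$ (the case $G \subseteq E$ being dealt with by Lemma \ref{SegreE}), set $F := G \cap E$ and $H := G/F \hookrightarrow E^*$. The requirement $\deg G \geq 0$ together with this classification leaves three types of lifting to exclude: (I) $F = 0$ and $H = H_\Sigma$ a proper saturated degree-$0$ subbundle; (II) $H = E^*$ and $F \subsetneq E$ of degree $-1$, with $F$ as described in Corollary \ref{mosubbsE}; (III) $F = 0$ and $H \subsetneq E^*$ a degree-$0$ elementary transformation ($E^*/H$ torsion of length $1$).

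For (I) and (II), both lifting conditions reduce to the same subspace of $H^1(E \otimes E)$: tensoring $0 \to F \to E \to V \otimes \Sigma \to 0$ with $E$, the subspace $\Image(H^1(F \otimes E) \to H^1(E \otimes E))$ controlling (II) equals $\Ker(H^1(E \otimes E) \to H^1((V \otimes \Sigma) \otimes E))$ controlling (I). Intersecting with $H^1(\Sym^2 E)$ and applying Lemma \ref{symplifting}: since $h^0(\Sym^2 V) = h^0(\wedge^2 V) = 0$ for generic stable $V$, we have $h^0(\wedge^2(V \otimes \Sigma)) = 0$, and the bad locus for each $\Lambda$ is exactly $H^1(\Sym^2 F)$. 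The required inequality is
\[ h^1(\Sym^2 F) + s(t-s) \ <\ h^1(\Sym^2 E) \quad \text{for every } s \in \{0,\ldots,t-1\}, \]
which by Riemann--Roch reduces to $n\bigl[1 + \tfrac{(g-1)(n(t+s)+3)}{2}\bigr] > s$ and holds comfortably given $g \geq 4$, $n \geq 1$, and $t < n(g-1)$.

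For (III), Lemma \ref{etlift} with its $F = E^*$ and $\tau$ of length $1$ forces $\delta(W) \in \psi(\Delta_{E \otimes E})$, a projective variety of dimension at most $2tn+1$, far smaller than $\dim \PP H^1(\Sym^2 E)$; a generic symplectic class in $H^1(\Sym^2 E)$ therefore avoids it. I expect the main technical obstacle to be the Riemann--Roch bookkeeping in (I)/(II), but the dimension gap is dominated by a term of order $n^2(g-1)(t+s)$, which comfortably exceeds $s(t-s) \leq t^2/4$ under our hypotheses.
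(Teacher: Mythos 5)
Your proof is correct and follows essentially the same route as the paper: the same three-case decomposition of a nonnegative-degree subbundle of $W$, with cases (I)/(II) excluded via Proposition \ref{int} and Lemma \ref{symplifting} (reducing the bad locus to $H^1(\Sym^2 F)$ since $h^0(\wedge^2(V\otimes\Sigma))=0$) and case (III) via Lemma \ref{etlift} and the dimension of $\Delta_{E\otimes E}$. The only cosmetic difference is that you identify the lifting conditions of (I) and (II) directly in cohomology (which, strictly, agree only after intersecting with $H^1(\Sym^2 E)$, the two subspaces of $H^1(E\otimes E)$ being transposes of one another), whereas the paper reduces (i) to (ii) by invoking the self-duality of the symplectic bundle $W$.
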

\begin{proof} From Lemma \ref{SegreE} it follows that every nonzero quotient of $E^*$ has positive degree, and hence every proper subbundle of $E^*$ has nonpositive degree. Since $W$ is nonsplit, it is semistable. Suppose $F \subset W$ is a subbundle of degree zero. Then we have a diagram
\begin{equation} \xymatrix{ 0 \ar[r] & E \ar[r] & W \ar[r] & E^* \ar[r] & 0 \\
0 \ar[r] & F_1 \ar[r] \ar[u] & F \ar[r] \ar[u] & F_2 \ar[r] \ar[u] & 0 } \label{degzerosubbext} \end{equation}
where $F_1$ is a subbundle of $E$ and $F_2$ a subsheaf of $E^*$. We distinguish three cases:
\begin{enumerate}
\renewcommand{\labelenumi}{(\roman{enumi})}
\item If $F_1 \neq 0$ then $\deg F_1 \leq -1$ by Lemma \ref{SegreE}, whence $F_2 = E^*$ and $\deg F_1 = -1$. By Corollary \ref{mosubbsE} (1), the bundle $F_1$ is an extension $0 \to L \to F_1 \to V \otimes \Lambda \to 0$ where $\Lambda$ is a (possibly zero-dimensional) subspace of $\C^t$.
\item If $F_1 = 0$ and $\rank F_2 < \rank E$, then $F = F_2$ is a proper subbundle of degree zero. By dualizing the statement of Lemma \ref{degminusone}, we see that $F$ is of the form $V^* \otimes \Pi$ for a nonzero subspace $\Pi \subset (\C^t)^*$.
\item If $F_1 = 0$ and $\rank F_2 = \rank E$ then $F = F_2$ is an elementary transformation of $E^*$ along a torsion sheaf of length 1.
\end{enumerate}
\noindent We deal with each of these possibilities in turn:

(i) Here we obtain a diagram
\[ \xymatrix{ & 0 \ar[d] & 0 \ar[d] & 0 \ar[d] & \\ 
0 \ar[r] & F_1 \ar[r] \ar[d] & F \ar[r] \ar[d] & E^* \ar[r] \ar[d] & 0 \\
0 \ar[r] & E \ar[r] \ar[d] & W \ar[r] \ar[d] & E^* \ar[r] \ar[d] & 0 \\
0 \ar[r] & V \otimes (\C^t / \Lambda) \ar[r]^{\sim} \ar[d] & V \otimes (\C^t / \Lambda) \ar[r] \ar[d] & 0 & \\
 & 0 & 0 & & } \]
Dualizing this diagram, we see that $W^*$ contains a subbundle of the form $V^* \otimes (\C^t / \Lambda)^*$ lifting from $E^*$. But since $W$ is self-dual, this means that we are also in situation (ii). Thus it suffices to exclude possibility (ii) in general.
\par
(ii) Clearly it suffices to treat the case $\dim \Pi = 1$. We show that for any inclusion $j \colon V^* \otimes \Pi \to E^*$, we have
\[ \dim \Ker \left(j^* \colon H^1 (\Sym^2 E) \to H^1 (\Hom(V^* \otimes \Pi, E)) \right) + \dim \PP^{t-1} < h^1 (\Sym^2 E). \]
%%Note that for any subspace $\Pi \subseteq (\C^t)^*$, the space $\Pi^{\perp}$ is defined by the exact sequence
%%\[ 0 \to \Pi^{\perp} \to \C^t \to \Pi^* \to 0. \]
We write $F_{\Pi}$ for the subbundle of $E$ defined by the diagram
\[ \xymatrix{ 0 \ar[r] & L \ar[r] & E \ar[r] & V \otimes \C^t \ar[r] & 0 \\
0 \ar[r] & L \ar[r] \ar[u] & F_{\Pi} \ar[r] \ar[u] & V \otimes \Pi^{\perp} \ar[r] \ar[u] & 0 } \]
On the vector bundle level, clearly we have
\[ \Ker \left( j^* \colon \Hom(E^* , E) \to \Hom(V^* \otimes \Pi, E) \right) \cong \Hom(F_{\Pi}^*, E). \]

Hence by Proposition \ref{int}, we have $\Ker \left( j^* \right) \cap \Sym^2 E = \Sym^2 F_{\Pi}$. Furthermore, $h^0 (V \otimes E) = 0$ by Lemma \ref{SegreE}. Therefore, by Lemma \ref{symplifting}, to show that situation (ii) does not arise in general, it suffices to show that
\[ h^1 (\Sym^2 F_{\Pi}) + h^0 (\wedge^2 (V \otimes \Pi)) + t-1 < h^1 (\Sym^2 E). \]
Since $V$ is generic, $h^0 (\wedge^2 (V \otimes \C^t )) = 0$. Then the required inequality follows from a computation using Riemann--Roch and the fact that $h^0(\Sym^2 F_{\Pi}) = 0$.
%The details:
%\begin{align*} h^1 (\Sym^2 F_{\Pi}) &= - \left( (-1) \cdot ( (t-1)n + 1 + 1) - \frac{((t-1)n + 1)((t-1)n + 2)}{2}(g-1) \right) \\
% &= (t-1)n + 1 + \frac{((t-1)n + 1)((t-1)n + 2)}{2}(g-1). \end{align*}
%From before, we have
%\[ h^1 (\Sym^2 E) = tn + 2 + \frac{(tn + 1)(tn + 2)}{2}(g-1). \]
%What we need will therefore follow from
%\[ (t-1)n + 2 + \frac{((t-1)n + 1)((t-1)n + 2)}{2}(g-1) + t -1 < tn + 2 + \frac{(tn + 1)(tn + 2)}{2}(g-1). \]
%This becomes
%\[ tn - n + t - 1 + (g-1) \sum_{i=1}^{(t-1)n+1}i < tn + (g-1) \sum_{i=1}^{tn+1}i \]
%\[ t - n - 1 < (g-1) \sum_{i=(t-1)n + 2}^{tn+1}i, \]
%which is clear.

(iii) By Lemma \ref{etlift}, it suffices to show that $\dim \psi \left( \Delta_{E \otimes E} \right) < h^1 (\Sym^2 E) - 1$. This follows from
\[ 2tn + 1 < (tn+2) + \frac{(tn+1)(tn+2)}{2}(g-1) - 1, \]
which is clear.
%which is a consequence of the inequalities $g-1 \geq 3$ and $\frac{(tn + 1)(tn+2)}{2} \geq tn$.
%This reduces to
%\[ tn < \frac{(tn + 1)(tn+2)}{2}(g-1). \]
%which holds since $g-1 \geq 3$ and $\frac{(tn + 1)(tn+2)}{2} \geq tn$.

In summary, a general symplectic extension $0 \to E \to W \to E^* \to 0$ admits no subbundles of nonnegative degree.
\end{proof}

We now describe the theta divisor of a generic such $W$.

\begin{theorem} \label{mainsymp} A general symplectic extension $0 \to E \to W \to E^* \to 0$ has a reducible theta divisor $\Theta_W$ which scheme-theoretically contains $(t-1) \left( \Theta_V + \Theta_{V^*} \right)$. In particular, if $t \geq 3$ then $\Theta_W$ is nonreduced. \end{theorem}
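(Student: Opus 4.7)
The plan is to mirror the cohomological argument of Theorem \ref{main} to produce one copy of $(t-1)\Theta_V$ inside $\Theta_W$, and then leverage the self-duality $W \cong W^*$ to obtain a second copy $(t-1)\Theta_{V^*}$ by symmetry. First, I would run the same local argument as in Theorem \ref{main}: for generic $P \in \Theta_V$, twisting $0 \to L \to E \to V \otimes \C^t \to 0$ by $P$ and taking global sections, together with the standard genericity inputs $h^0(P \otimes L) = 0$, $h^0(P \otimes V) = 1$ and $h^1(P \otimes L) = 1$, forces $h^0(P \otimes E) \geq t - 1$, and hence $h^0(P \otimes W) \geq t - 1$ via the inclusion $E \hookrightarrow W$.

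To obtain the $\Theta_{V^*}$ component I would exploit the self-duality. Since $W \cong W^*$ and $\chi(N \otimes W) = 0$ for every $N \in J^{g-1}$, Serre duality yields
\[ h^0(P \otimes W) \ = \ h^0(P \otimes W^*) \ = \ h^1(\Kx P^{-1} \otimes W) \ = \ h^0(\Kx P^{-1} \otimes W). \]
The same Serre-duality identity applied to $V$ shows that $P \in \Theta_{V^*}$ if and only if $\Kx P^{-1} \in \Theta_V$, so the first step already delivers $h^0(P \otimes W) \geq t - 1$ at generic points of $\Theta_{V^*}$ as well. Theorem \ref{Riemann} now gives $\mult_P \Theta_W \geq t - 1$ at generic $P$ in each of the prime divisors $\Theta_V$ and $\Theta_{V^*}$, which are distinct for generic $V$; hence $\Theta_W \supseteq (t-1)(\Theta_V + \Theta_{V^*})$ scheme-theoretically. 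The residual is strictly positive since $\Theta_W \equiv 2(tn+1)\Theta$ while $(t-1)(\Theta_V + \Theta_{V^*}) \equiv 2n(t-1)\Theta$, leaving class $(2n+2)\Theta$, so $\Theta_W$ is reducible and, for $t \geq 3$, nonreduced.

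It remains to check that $\Theta_W$ is actually a divisor, i.e., not all of $J$. For $N$ outside $\Theta_V \cup \Theta_{V^*}$ and the codimension-two locus $\{ N : h^0(N \otimes L) > 0 \}$, the twist of $0 \to E \to W \to E^* \to 0$ by $N$ has $h^0(N \otimes E) = 0$ and $h^0(N \otimes E^*) = h^0(N \otimes L^{-1}) = 1$, so $h^0(N \otimes W) = 0$ exactly when the connecting cup product $\sigma \cup \delta(W) \in H^1(N \otimes E)$ with a generator $\sigma$ of $H^0(N \otimes E^*)$ is nonzero. The main obstacle is to ensure that the constraint $\delta(W) \in H^1(\Sym^2 E)$ still permits this cup product to be nonzero for some such $N$; I would settle this by a dimension count in the spirit of \S \ref{resdiv}, showing that the image of the rational map $J \dashrightarrow \PP H^1(\Sym^2 E)^*$ induced by the relevant multiplication maps is not contained in any hyperplane arising from an honest symplectic extension class.
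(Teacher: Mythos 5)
Your derivation of the containment $(t-1)\left(\Theta_V + \Theta_{V^*}\right) \subseteq \Theta_W$ is correct and coincides with the paper's: the paper phrases the symmetry step as $\iota^* \Theta_W = \Theta_W$ and $\iota^* \Theta_V = \Theta_{V^*}$ for the involution $\iota(N) = \Kx N^{-1}$, which is exactly your pointwise identity $h^0(P \otimes W) = h^0(\Kx P^{-1} \otimes W)$, combined with $\Theta_V \neq \Theta_{V^*}$ for generic $V$. Reducibility then comes for free from the two distinct components, as you note.

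The gap is in the verification that $\Theta_W$ is actually defined, i.e.\ not all of $J$ --- the one step where the constraint $\delta(W) \in H^1(\Sym^2 E)$ does real work --- and you explicitly defer it. Two problems with the plan as stated. First, the target you announce (that $\mu_s(J)$ is contained in \emph{no} hyperplane coming from a symplectic class) is full linear nondegeneracy of $\mu_s(J)$, since by Criterion \ref{sympcrit} every hyperplane of $\PP H^0(\Kx \otimes \Sym^2 E^*)$ arises from a symplectic class; this is stronger than needed and is not what a dimension count gives. For the generic statement it suffices to show that, for one good $N$, the locus of symplectic classes for which the section of $N \otimes E^*$ lifts is a \emph{proper} subspace of $H^1(\Sym^2 E)$. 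Second, identifying that locus is precisely what requires the symplectic lifting criterion: the paper applies Lemma \ref{symplifting} with $G = N$ to the subbundle $F = \left( E^*/N^{-1} \right)^* \subset E$, concluding that $h^0(N \otimes W) > 0$ if and only if $\delta(W) \in H^1(\Sym^2 F)$ (as $h^0(\wedge^2 N) = 0$), and then the Riemann--Roch computation $h^1(\Sym^2 E) - h^1(\Sym^2 F) = 1$ (using $h^0(\Sym^2 F) = 0$) shows this subspace is proper. Your cup-product formulation is dual to this, but without Lemma \ref{symplifting} (or Proposition \ref{int} underlying it) you have not determined the subspace of $H^1(\Sym^2 E)$ on which the relevant symmetric cup product vanishes, so the ``dimension count'' you invoke has not actually been set up, let alone carried out.
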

\begin{proof}
Exactly as in the proof of Theorem \ref{main}, we see that the theta divisor of a generic such $W$, if it exists, scheme-theoretically contains $(t-1)\Theta_V$. By Serre duality (see also Beauville \cite[\S 2]{B2}) we have $\iota^* \Theta_W = \Theta_W$, where $\iota$ is the involution of $J$ induced by $N \mapsto \Kx N^{-1}$. 
In general, $\iota^* \Theta_V = \Theta_{V^*}$ by Serre duality. Since $V$ is generic of degree zero, we may assume that $\Theta_V \neq \Theta_{V^*}$. Then $\Theta_W$ scheme-theoretically contains $(t-1) \left(\Theta_V + \Theta_{V^*} \right)$.

We check that $\Theta_W$ is defined. Choose a generic line bundle $N \in J \backslash \Supp \left( \Theta_V + \Theta_{V^*} \right)$. By Riemann--Roch and genericity, $h^0 (N \otimes E^* ) = 1$, and the corresponding map $j \colon N^{-1} \to E^*$ is a vector bundle injection. Dualizing, we obtain an exact sequence $0 \to F \to E \to N \to 0$, where $F := \left( E^* / N^{-1} \right)^*$. Now we claim that
\begin{equation} h^1 ( \Sym^2 E) - h^1 \left( \Sym^2 F \right) > 0. \label{FEineq} \end{equation}
Since $\Sym^2 F$ is a subbundle of $\Sym^2 E$, it has no global sections. Then a computation with Riemann--Roch shows that the left hand side of (\ref{FEineq}) is equal to $1$ (the expected value).

%\emph{Details: We have $\rank \left(N^{-1} \right)^{\perp} = tn$ and
%\[ \deg \left(N^{-1} \right)^{\perp} = \deg E - \deg N = -1 - (g-1) = -1 - g + 1 = -g. \]
%Since $(N^{-1})^{\perp}$ is a subbundle of $E$, we have
%\[ h^0 \left( \Sym^2 \left( N^{-1} \right)^{\perp} \right) \leq h^0 (\Sym^2 E) = 0. \]
%Then $h^1 \left( \Sym^2 \left(N^{-1} \right)^{\perp} \right)$ is given by
%\begin{align*} - \chi &= - \left( (tn + 1)(-g) - \frac{tn(tn+1)}{2}(g-1) \right) \\
% &= (tn + 1)g + \frac{tn(tn+1)}{2}(g-1) \\
% &= (tn + 1) + (tn + 1)(g - 1) + \frac{tn(tn+1)}{2}(g-1) \\
% &= (tn + 1) + (g-1) \left( tn + 1 + \frac{tn(tn+1)}{2} \right) \\
% &= (tn + 1) + (g-1) \left( \frac{(tn + 1)(tn+2)}{2} \right) \end{align*}
%On the other hand,
%\[ h^1 (\Sym^2 E) = tn + 2 + \frac{(tn + 1)(tn + 2)}{2}(g-1), \]
%whence $h^1 (\Sym^2 E) - h^1 \left( \Sym^2 F \right) = 1$, as claimed.}
By Lemma \ref{symplifting} (with $G = N$), we have $h^0 (N \otimes W) = 0$ for a generic symplectic extension $0 \to E \to W \to E^* \to 0$. Hence such a $W$ has a well-defined theta divisor $\Theta_W$. As in Theorem \ref{main}, we check that $\Theta_W$ in general has at least one component distinct from $\Theta_V$. This completes the proof of Theorem \ref{mainsymp}. \end{proof}

\noindent We summarize as follows:
\begin{corollary}
%Let $\Theta_V$ be the theta divisor of a generic stable bundle $V$ of rank $n \geq 1$. Let $t$ be a positive integer with $t < n(g-1)$. Then there exist stable symplectic vector bundles $W$ of rank $2(tn + 1)$ such that $\Theta_W$ exists and contains $(t-1) \left( \Theta_V + \Theta_{V^*} \right)$ as a subscheme. In particular, for all even ranks $r \geq 6$ (resp., $r \geq 8$) there exist stable symplectic vector bundles with reducible (resp., reducible and nonreduced) theta divisor. \end{corollary}
\begin{enumerate}
\renewcommand{\labelenumi}{(\arabic{enumi})}
\item Let $X$ be a curve of genus $g \geq 5$. Then there exist stable symplectic bundles of rank $2k$ for all $k \geq 3$ (resp., $k \geq 4$) with reducible (resp., reducible and nonreduced) theta divisors.
\item If $g = 4$ then there exist stable symplectic bundles of rank $2k$ for all $k \geq 3$ (resp., $k \geq 7$) with reducible (resp., reducible and nonreduced) theta divisors.
\end{enumerate} \end{corollary}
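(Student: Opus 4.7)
The plan is to deduce both parts from Lemma \ref{sostability} together with Theorem \ref{mainsymp}, simply by choosing admissible parameters $(n, t)$ in the construction of Sections \ref{constructionofE} and \ref{sympexamples}. For any $n \geq 1$ and $t$ satisfying $2 \leq t < n(g-1)$, the construction of Section \ref{constructionofE} produces $E$ of rank $nt + 1$; Lemma \ref{sostability} yields a stable symplectic extension $0 \to E \to W \to E^* \to 0$ of rank $2(nt+1)$; and Theorem \ref{mainsymp} ensures that $\Theta_W$ is reducible and scheme-theoretically contains $(t-1)(\Theta_V + \Theta_{V^*})$, hence nonreduced as soon as $t \geq 3$. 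Thus the corollary reduces to showing that every claimed half-rank $k$ can be written as $k = nt + 1$ with $(n, t)$ admissible (and $t \geq 3$ in the nonreduced case).

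For part (1) with $g \geq 5$, I would take $(n, t) = (1, 2)$ for $k = 3$ and $(1, 3)$ for $k = 4$ (both admissible since $g - 1 \geq 4$); for larger $k$, I would factor $k - 1 = nt$ with $t$ chosen small relative to $n(g-1)$, typically $t = 2$ and $n = (k-1)/2$ when $k$ is odd, or a nontrivial factorization of $k-1$ when it is composite. For nonreducedness one needs $t \geq 3$, which forces $k \geq 4$ and is realized by $(1, 3)$ and analogous choices. For part (2) with $g = 4$, the constraint $t < 3n$ is tighter, but the same strategy applies using pairs like $(1, 2), (2, 2), (n, 2)$ for the reducible case and $(n, 3)$ with $n \geq 2$ for the nonreduced case (so $k = 3n + 1 \geq 7$), together with further factorizations of $k - 1$ to fill in the intermediate ranks.

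The main obstacle is purely combinatorial: ensuring that the required factorization $k - 1 = nt$ with $2 \leq t < n(g-1)$ (resp.\ $3 \leq t < n(g-1)$) exists for every $k$ in the claimed range. This requires slightly more care when $k - 1$ is prime, in which case the only options are $(n, t) = (1, k-1)$ or $(k-1, 1)$ and one must verify $k - 1 < g - 1$. Once admissible parameters are secured for each target rank, the conclusion follows immediately from Lemma \ref{sostability} and Theorem \ref{mainsymp}, with no further geometric input.
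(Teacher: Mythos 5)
Your overall strategy coincides with the paper's: choose admissible parameters $(n,t)$, build $E$ of rank $nt+1$ as in \S\ref{constructionofE}, and apply Lemma \ref{sostability} and Theorem \ref{mainsymp} to the symplectic extension $0 \to E \to W \to E^* \to 0$, which has rank exactly $2(nt+1)$. The paper's own proof does no more than exhibit the threshold cases ($(n,t)=(1,2)$ and $(1,3)$ for $g\geq 5$; $(1,2)$ and $(2,3)$ for $g=4$). You are right to observe that, unlike the vector bundle construction of \S\ref{mainexs} where the quotient $M$ supplies a free rank parameter $m$, here the half-rank is rigidly $k=nt+1$, so the entire content of the corollary for general $k$ is the ``purely combinatorial'' step you defer to the end. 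That step is a genuine gap, and it cannot be closed in the way you suggest.

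Concretely: reducibility of $\Theta_W$ comes from the containment of $(t-1)(\Theta_V+\Theta_{V^*})$, which is vacuous when $t=1$, so your fallback $(n,t)=(k-1,1)$ is inadmissible; when $k-1$ is an odd prime the only remaining candidate is $(n,t)=(1,k-1)$, which requires $k-1<g-1$ and therefore fails for all $k\geq g$. For instance, with $g=5$ and $k=6$ (rank $12$) one needs $nt=5$ with $2\leq t<4n$, and neither $(1,5)$ nor $(5,1)$ qualifies, so neither your argument nor the paper's produces this case. The nonreduced assertion is worse, since it needs $t\geq 3$: for $g=5$ and $k=5$ one needs $nt=4$ with $3\leq t<4n$, forcing $(1,4)$, and $4<4$ is false; for $g=4$ and $k=8$ one needs $nt=7$ with $3\leq t<3n$, which is impossible. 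So the statement as written does not follow from the construction for every $k$ in the claimed range; a complete argument would have to either enlarge the construction (e.g.\ introduce an isotropic analogue of the free summand $M$ to pad the rank) or restrict the admissible values of $k$ to those of the form $nt+1$ with $2\leq t<n(g-1)$ (resp.\ $3\leq t<n(g-1)$). In particular, the prime case should not be presented as a detail ``requiring slightly more care'': the verification you propose genuinely fails there.
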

\begin{proof} If $g \geq 5$ then we may as before set $n = 1$ and $t = 2$ or $3$. This proves (1).

As for (2): Setting $n=1$ and $t = 2$, we obtain stable symplectic bundles of rank $6$ with reducible theta divisors as in case (1). If we set $n = 2$ and $t = 3$, we obtain stable symplectic bundles of rank $14$ with reducible and nonreduced theta divisors. \end{proof}

\subsection{The residual divisor} As in \S \ref{resdiv}, we may give a geometric description of the residual divisor $R_W$ defined by $\Theta_W = (t-1)(\Theta_V + \Theta_{V^*}) + R_W$. Replacing $M$ with $E^*$, we define a map
\[ \mu_s \colon J \dashrightarrow \PP H^0 ( \Kx \otimes \Sym^2 E^* ) \]
by composing the projection $\PP H^0 ( \Kx \otimes E^* \otimes E^* ) \dashrightarrow \PP H^0 ( \Kx \otimes \Sym^2 E^*)$ with $\mu$. Note that if $h^0 (N \otimes E^*) = 1 = h^0 (\Kx N^{-1} \otimes E^*)$, then the image of
\[ m_N \colon H^0 (N \otimes E^*) \otimes H^0 (\Kx N^{-1} \otimes E^*) \ \to \ H^0 (\Kx \otimes E^* \otimes E^* ) \]
intersects $H^0 ( \Kx \otimes \wedge^2 E^* )$ in zero, 
%and hence if $h^0 \cdot h^0 = 1$ then \Im (sym \circ m_N) also has dimension 1,
 so the base locus of $\mu_s$ is no bigger than that of \nolinebreak $\mu$.

As before, an extension $0 \to E \to W \to E^* \to 0$ defines a hyperplane $H_W$ on $H^0 (\Kx \otimes E^* \otimes E^*)$. Since $\delta(W)$ belongs to $H^1 (\Sym^2 E)$, this $H_W$ cuts out a hyperplane $H_W^s$ on $\PP H^0 (\Kx \otimes \Sym^2 E^*)$.

As before, we write $R'_W$ for the complement of the base locus of $\mu_s$ in $R_W$.

\begin{lemma} The set-theoretic intersection of $\mu_s(J)$ and $H_W^s$ is exactly $R'_W$. \end{lemma}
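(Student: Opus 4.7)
The plan is to mimic the argument of Lemma \ref{resdivgeom}, working this time with the subspace $H^1(\Sym^2 E) \subset H^1(E \otimes E)$ and its Serre-dual quotient. Fix $N \in J$ outside the base locus of $\mu_s$, so in particular $h^0(N \otimes E) = 0$ and $h^0(N \otimes E^*) = 1 = h^0(\Kx N^{-1} \otimes E^*)$; let $\sigma$ generate $H^0(N \otimes E^*)$. The long exact sequence of $0 \to E \to W \to E^* \to 0$ shows that $N \in R'_W$ if and only if $h^0(N \otimes W) > 0$, which is equivalent to the vanishing of $\sigma \cup \delta(W)$ in $H^1(N \otimes E)$.

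First I would translate this cup product condition into Serre-dual language, exactly as in Lemma \ref{resdivgeom}: the cup product map $H^1(E \otimes E) \to \Hom(H^0(N \otimes E^*), H^1(N \otimes E))$ is dual to $m_N$, so $\sigma \cup \delta(W) = 0$ is equivalent to $\delta(W)$ annihilating the line $\Image(m_N) \subset H^0(\Kx \otimes E^* \otimes E^*)$. Next I would invoke that the inclusion $H^1(\Sym^2 E) \hookrightarrow H^1(E \otimes E)$ is Serre-dual to the natural projection $p \colon H^0(\Kx \otimes E^* \otimes E^*) \twoheadrightarrow H^0(\Kx \otimes \Sym^2 E^*)$ with kernel $H^0(\Kx \otimes \wedge^2 E^*)$; concretely, this amounts to the observation that the contraction $\Sym^2 E \otimes \wedge^2 E^* \to \Ox$ vanishes identically. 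Since $\delta(W) \in H^1(\Sym^2 E)$, its Serre pairing with any $\omega \in H^0(\Kx \otimes E^* \otimes E^*)$ depends only on $p(\omega)$.

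Putting the pieces together, $\delta(W)$ annihilates $\Image(m_N)$ if and only if it annihilates $p(\Image(m_N))$, which by definition of $\mu_s$ is the point $\mu_s(N)$. This image is a well-defined point of $\PP H^0(\Kx \otimes \Sym^2 E^*)$ precisely because the remarks preceding the lemma guarantee $\Image(m_N) \cap \Ker(p) = 0$ for $N$ outside the base locus, and $H_W^s$ is by construction the hyperplane cut out by $\delta(W)$ under this pairing. The chain of equivalences therefore yields $N \in R'_W \Leftrightarrow \mu_s(N) \in H_W^s$, from which the claimed set-theoretic equality follows. The one point that requires care is the compatibility of Serre duality with the $\Sym^2/\wedge^2$ decomposition in the middle paragraph; once the pairing between the symmetric and antisymmetric pieces is seen to vanish, the rest is a direct transcription of the asymmetric argument.
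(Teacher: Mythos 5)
Your proposal is correct and follows essentially the same route as the paper: reduce to the cup-product criterion via Lemma \ref{resdivgeom}, then use that $\delta(W)$, being symmetric, pairs trivially with $H^0(\Kx \otimes \wedge^2 E^*)$, so its pairing with $\Image(m_N)$ factors through the projection to $H^0(\Kx \otimes \Sym^2 E^*)$ — which is exactly the paper's observation that $\delta(W) \circ S = \delta(W)$. Your extra care about $\Image(m_N) \cap \Ker(p) = 0$ matches the remark preceding the lemma in the text.
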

\begin{proof} If $h^0 (N \otimes E^*) \cdot h^0 (\Kx N^{-1} \otimes E^*) = 1$, then exactly as in Lemma \ref{resdivgeom}, we see that $\mu (N) \in H_W$ if and only if $h^0 (N \otimes W) > 0$. Write
\[ S \colon H^0 ( \Kx \otimes E^* \otimes E^*) \to H^0 ( \Kx \otimes \Sym^2 E^* ) \]
for the projection. Since $\delta(W)$ is symmetric, $\delta(W) \circ S = \delta(W)$. Thus $\Image \, m_N \in \Ker \, \delta(W)$ if and only if $\Image (m_N \circ S) \subseteq H_W^s$; in other words, $\mu_s(N) \in H_W^s$. The lemma follows. \end{proof}

%\begin{proof} Suppose $N \in J$ satisfies
%\[ h^0 (N \otimes E^*) \cdot h^0 (\Kx N^{-1} \otimes E^*) = 1, \]
%so $N$ lies outside the indeterminacy locus of $\mu$. We have
%\[ 0 \to H^0 (N \otimes W) \to H^0 (N \otimes E^*) \xrightarrow{\cdot \cup \, \delta(W)} H^1 (N \otimes E) \to \cdots \]
%Then $N \in R_W$ if and only if the (up to scalar) unique section $\sigma$ of $N \otimes E^*$ satisfies $\sigma \cup \delta(W) = 0$. Since $h^0 (N \otimes E^*) = 1 = h^1 (N \otimes E)$, this is equivalent to
%\[ \cdot \cup \delta(W) = 0 \ \in \ \Hom \left( H^0 (N \otimes E^*), H^1 (N \otimes E) \right). \]
%Now it is well known that via Serre duality, the cup product map
%\[ H^1 (\Hom(E^* , E)) \to \Hom \left( H^0 (N \otimes E^* ), H^1 (N \otimes E) \right) \] %= H^0 (N \otimes E^*)^* \otimes H^1 (N \otimes E) \]
%is dual to $m_N$. Thus $h^0 (N \otimes W) > 0$ if and only if $m_N^* \delta(W) = 0$; in other words, $\Image m_N$ belongs to the kernel of the linear form on $H^0 (\Kx \otimes E^* \otimes E^*)$ defined by $\delta(W)$.
%
%Since $\delta(W)$ is symmetric, we have $\delta(W) \circ \sym = \delta(W)$ as linear forms on $H^0 (\Kx \otimes E^* \otimes E^*)$. Thus $\Image \, m_N \subset \Ker \, \delta(W)$ if and only if $\Image \left( \sym \circ m_N \right) \subseteq \Ker \, \delta(W)$, which is equivalent to $\mu_s ( N ) \in H_W$. The lemma follows. \end{proof}

\begin{remark} Since $\Theta_W$ and $\Theta_V + \Theta_{V^*}$ belong to $|2(tn+1)\Theta|_+$ and $|2n\Theta|_+$ respectively, clearly $R_W$ belongs to the subspace $|2(n+1)\Theta|_+$ of $|2(n+1)\Theta|$. In fact $\mu_s$ is $\iota$-invariant by construction, and factorizes via the Kummer variety.%%, the quotient of $J$ by the involution $N \mapsto \Kx N^{-1}$.

\end{remark}

\appendix

\section{Raynaud's example}

In this appendix we construct a stable rank $2$ vector bundle over a bi-elliptic curve of genus $g \geq 3$ having a 
reducible theta divisor. This construction is attributed to M. Raynaud.

\bigskip

Let $X$ be a smooth projective curve of genus $g \geq 3$. We assume that $X$ is bi-elliptic, i.e., $X$  admits a degree $2$ map to an elliptic 
curve $Z$
$$\pi \colon X \to Z.$$
We continue to write $J$ for the Picard variety of line bundles of degree $g-1$ over $X$,  $H^0(V)$ for $H^0 ( X, V )$
and $h^0(V)$ for $\dim  H^0(X,V)$.

Let $i$ be the sheet involution of $X$. The canonical bundle $\Kx$ of $X$ equals $\Ox(R)$, where $R$ is the ramification divisor of $\pi$. 
Let $M$ be a line bundle of degree $g$ on $Z$, and denote $L$ the degree $2$ line bundle over $X$ defined by $\Kx L = \pi^* M$.
%\[ \deg \pi^* M = 2g = (2g - 2) + 2 = \deg \Kx + \deg L \]
We choose a square root $\zeta$ of $L$, so $\zeta^2 = L$, and $\deg \zeta = 1$. By Riemann--Roch, we have
%\[ h^0 (Z, M) = g - (1-1) = g \quad \hbox{and} \quad h^0 (X, \Kx L) = 2g - (g-1) = g+1 \]
\[ h^0 (Z, M) = g \quad \hbox{and} \quad h^0 (X, \Kx L) = g+1. \]
Hence the image of the injective map
\[ \pi^* \colon H^0(Z, M) \hookrightarrow H^0(X, \Kx L) \]
determines a point $e \in |\Kx L|^* \cong \PP^g$. We obtain a commutative diagram
\[ \xymatrix{X \ar[r] \ar[d]_{\pi} & |\Kx L|^* \cong \PP^g \ar[d]^{p} \\
 Z \ar[r] & |M|^* \cong \PP^{g-1} }. \]
% The right hand vertical arrow is $\ev_x \mapsto \ev_{\pi(x)} = \pi^* \ev_x$.
The map $p = \PP(\pi^*)^*$  is projection with centre $e$. Moreover, the involution $i$ induces a decomposition into eigenspaces 
$H^0(\pi^* M)_- \oplus H^0(\pi^* M)_+$, where the second summand corresponds to $e$. We 
consider the rank $2$ bundle $E$ with trivial determinant given by the extension class $e \in |\Kx L|^* = \PP \mathrm{Ext}^1(\zeta, \zeta^{-1})$.
Hence $E$ fits into the  exact sequence
\begin{equation} 0 \to \zeta^{-1} \to E \to \zeta \to 0. \label{extE} \end{equation}

\begin{proposition} The bundle $E$ is stable. \end{proposition}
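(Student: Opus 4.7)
The plan is to reduce the stability of $E$ to the nonexistence of line subbundles of nonnegative degree, and then exhaust the possibilities using the given extension.

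Since $\det E = \Ox$, rank $E = 2$, stability is equivalent to showing that every line subbundle has negative degree. First I would let $N \subset E$ be a line subbundle and compose with the projection $E \to \zeta$. Either this composition is zero, forcing $N = \zeta^{-1}$ of degree $-1$, or it is injective, giving $N \hookrightarrow \zeta$ and hence $\deg N \leq 1$. The case $\deg N = 1$ would force $N = \zeta$ and split the sequence $(\ref{extE})$, contradicting that the extension class $e$ is nonzero by construction. So the entire content of the proposition is ruling out subbundles of degree exactly $0$.

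A degree $0$ line subbundle of $E$ must be of the form $\zeta(-x)$ for some $x \in X$, and such an $N$ lifts to $E$ if and only if the pullback of $e$ to $\mathrm{Ext}^1(\zeta(-x),\zeta^{-1}) = H^1(L^{-1}(x))$ vanishes. From the short exact sequence $0 \to L^{-1} \to L^{-1}(x) \to \C_x \to 0$, one sees that the natural map $H^1(L^{-1}) \to H^1(L^{-1}(x))$ is surjective with one-dimensional kernel; dually, this corresponds to the inclusion $H^0(\Kx L(-x)) \hookrightarrow H^0(\Kx L)$. Thus the lifting exists if and only if the linear functional $e$ on $H^0(\Kx L)$ annihilates the subspace $H^0(\Kx L(-x))$, i.e.\ if and only if
\[ H^0(\Kx L(-x)) \ \subseteq \ \pi^* H^0(Z,M), \]
since by construction the hyperplane defined by $e$ is precisely the image of $\pi^*$.

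The final step is a dimension count. Both $H^0(\pi^* M (-x))$ and $\pi^* H^0(Z,M)$ have dimension $g$, so containment would force equality. Since $\deg M = g \geq 3$ on the elliptic curve $Z$, the line bundle $M$ is very ample and hence base-point-free on $Z$, so the evaluation map $H^0(Z,M) \to M|_{\pi(x)}$ is surjective. Pulling back, evaluation at $x$ is already surjective when restricted to $\pi^* H^0(Z,M)$, so the intersection $\pi^* H^0(Z,M) \cap H^0(\pi^* M(-x))$ has dimension only $g - 1$. This rules out the containment, so no $\zeta(-x)$ lifts to $E$, completing the proof.

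The only subtle step is step two, translating the vanishing of the pullback extension class into the geometric condition $H^0(\Kx L(-x))\subseteq \pi^*H^0(Z,M)$; once this identification is in place, the remaining obstruction evaporates by the very-ampleness of $M$. I do not expect any genuine difficulty, since every ingredient has already been set up in the construction of $e$.
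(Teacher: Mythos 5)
Your proof is correct and follows essentially the same route as the paper: the paper invokes Lange--Narasimhan to reduce stability to showing that $e$ does not lie on the image of $X$ in $|\Kx L|^*$, and then derives a contradiction from the base-point-freeness of $|M|$ on $Z$, exactly as in your final step. The only difference is that you prove the Lange--Narasimhan reduction by hand, via the elementary cohomological lifting criterion identifying degree-zero subbundles $\zeta(-x)$ with points $x$ whose associated hyperplane $H^0(\Kx L(-x))$ is annihilated by $e$; this makes the argument self-contained but is not a genuinely different method.
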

\begin{proof} The bundle $E$ is clearly semistable, since $e$ is nonzero. To see that is stable, it suffices by Lange--Narasimhan \cite[Proposition 1.1]{LNa} to show that $e$ does not belong to the image of $X$ in $|\Kx L|^*$. Now the involution $i$ induces a decomposition into eigenspaces of $H^0( \pi^* M)_- \oplus H^0(\pi^* M)_+$, where the second summand defines the hyperplane $e \in H^{0}(\Kx \zeta^2 )^*$. If $H^0(\pi^* M)_+$ is of the form $H^0 (\Kx L(-x))$ for some $x \in X$ then the linear system $|M|$ on $Z$ would have the base point $\pi(x)$. But this is impossible since $\deg M > 1$.
% What we need is $\deg M > 2g(Z) - 1 = 1$.
\end{proof}

Given $z \in Z$, we denote by $\overline{z}$ the effective degree $2$ divisor $\pi^{-1}(z)$. Note that for any $z \in Z$, the line spanned by $\overline{z}$ passes through $e$. By Lange--Narasimhan \cite[Proposition 5.3]{LNa}, the set of line subbundles of maximal degree $-1$ of $E$ contains
\[ \{ \zeta(-\overline{z}) \in \Pic^{-1}(X) : z \in Z \} \cong Z. \]
%Note that $S$ contains $\zeta^{-1}$ and that $S$ is isomorphic to $Z$.
Let $\Theta_E \subset J$ be the theta divisor associated to $E$. %Its set-theoretical support is given by
%\[ \Supp \Theta_E = \{ \eta \in J : h^0(X, E \otimes \eta) = 0 \}. \]
We denote by $X^d$ and $X^{(d)}$ the $d$-th Cartesian and the $d$-th symmetric product of the curve $X$ respectively and by 
$q : X^d \to X^{(d)}$ the natural projection.
\begin{lemma} \label{phibir} 
The natural map $\phi \colon Z \times  X^{(g-2)} \to J$ defined by
\[ (z, D) \mapsto \Ox(\overline{z} + D) \otimes \zeta^{-1} \]
is birational onto its image, which is contained in the theta divisor $\Theta_E$. Therefore the divisor $\cD_1 := \mathrm{Im} \  \phi$ is an irreducible component of $\Theta_E$. 
\end{lemma}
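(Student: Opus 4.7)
The plan is to prove the three assertions in order: (i) $\mathrm{Im}(\phi) \subseteq \Theta_E$; (ii) $\phi$ is birational onto its image; (iii) $\mathcal{D}_1$ is an irreducible component of $\Theta_E$, via a dimension comparison.

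For (i), fix $(z, D) \in Z \times X^{(g-2)}$ and set $N := \Ox(\overline{z} + D) \otimes \zeta^{-1}$. The paper has just recalled (via Lange--Narasimhan) that $\zeta(-\overline{z}) \hookrightarrow E$ is a line subbundle for every $z \in Z$. Tensoring this inclusion with $N$ yields
\[ \Ox(D) \ \cong\ \zeta(-\overline{z}) \otimes N \ \hookrightarrow\ E \otimes N, \]
whence $h^0(E \otimes N) \geq h^0(\Ox(D)) \geq 1$, and so $N \in \Theta_E$.

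For (ii), suppose $\phi(z, D) = \phi(z', D')$; then $\overline{z}+D$ and $\overline{z'}+D'$ are linearly equivalent effective divisors of degree $g$. I would show that for generic $(z, D)$ one has $h^0(\Ox(\overline{z} + D)) = 1$: by Riemann--Roch and Serre duality this reduces to checking that $\Kx \otimes \Ox(-\overline{z}-D) \notin W^0_{g-2} \subset J^{g-2}$. The image of the auxiliary map $(z, D) \mapsto \Kx \otimes \Ox(-\overline{z} - D) \in J^{g-2}$ has dimension $g-1$: the partial derivative with respect to $z$ lies in the $1$-dimensional subspace $\pi^* H^1(O_Z) \subset H^1(\Ox)$, and this subspace is generically transverse to the $(g-2)$-dimensional Abel--Jacobi tangent at $D$ (equivalently, $H^0(\Kx(-D))$ is not entirely contained in the anti-invariant part of $H^0(\Kx)$ for generic $D$). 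Since $\dim W^0_{g-2} = g-2$, the auxiliary map generically lands outside $W^0_{g-2}$, so $h^0(\Ox(\overline{z}+D)) = 1$ and hence $\overline{z}+D = \overline{z'}+D'$ as divisors. If moreover $z \neq z'$, the fibers $\overline{z}$ and $\overline{z'}$ are disjoint, so $\overline{z'} \subseteq D$; but the locus of $D \in X^{(g-2)}$ containing some fiber of $\pi$ is the image of $Z \times X^{(g-4)}$, of dimension $g-3 < g-2$, hence proper. Thus for generic $(z, D)$ the preimage under $\phi$ reduces to $(z, D)$.

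Combining (i) and (ii), $\phi$ is birational onto its image, so $\dim \mathcal{D}_1 = g-1 = \dim J - 1$. Since $\mathcal{D}_1 \subseteq \Theta_E$ is an irreducible subvariety of codimension one in $J$, it is an irreducible component of $\Theta_E$. The most delicate step is the dimension count for the auxiliary map, which rests on the observation that the $i$-invariant part of $H^0(\Kx)$ is $1$-dimensional (spanned by the pullback of the nowhere-vanishing holomorphic differential on the elliptic base $Z$), so that for a generic $D \in X^{(g-2)}$ the $2$-dimensional space $H^0(\Kx(-D))$ is not wholly anti-invariant.
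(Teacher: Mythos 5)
Your proof is correct, and for the containment $\mathrm{Im}\,\phi \subseteq \Theta_E$ it coincides with the paper's (the paper phrases it as the chain $\zeta(-\overline{z}-D) \hookrightarrow \zeta(-\overline{z}) \hookrightarrow E$, i.e.\ $N^{-1}\hookrightarrow E$, rather than tensoring by $N$, but it is the same observation). Where you genuinely diverge is the birationality step. The paper disposes of it with a bare dimension count: the locus $W^1_g$ of $N \in \Pic^g(X)$ with $h^0(N)\geq 2$ has dimension $g-2$, while $\dim\bigl(Z\times X^{(g-2)}\bigr)=g-1$. Taken literally this is a little quick, because the map factors through $X^{(g)}$ and the preimage of $W^1_g$ in $X^{(g)}$ has dimension $g-1$; one must know that $(z,D)\mapsto \Ox(\overline{z}+D)$ is generically finite before the count in $\Pic^g(X)$ applies. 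Your tangent-space argument --- the $z$-derivative spans the invariant line $\pi^*H^1(O_Z)\subset H^1(\Ox)$, which is transverse to the Abel--Jacobi image (the annihilator of $H^0(\Kx(-D))$) precisely when $H^0(\Kx(-D))\not\subseteq H^0(\Kx)^-$ --- supplies exactly this missing transversality, so your route is more robust than the paper's. The one place you assert rather than argue is that $H^0(\Kx(-D))\not\subseteq H^0(\Kx)^-$ for generic $D$: the fact that $H^0(\Kx)^+$ is one-dimensional does not by itself prevent a particular $2$-plane from lying in the anti-invariant hyperplane. This is easily repaired: every nonzero $\omega\in H^0(\Kx)$ lies in $H^0(\Kx(-D))$ for some effective $D$ of degree $g-2$ (take $g-2$ of its zeros), so these planes cannot all lie in $H^0(\Kx)^-$, and containment in a fixed hyperplane is a closed condition on the locus where $h^0(\Kx(-D))=2$. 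Finally, your separate treatment of the coincidence $\overline{z}+D=\overline{z'}+D'$ with $z\neq z'$ (forcing $\overline{z'}\leq D$, a codimension-one degeneration) is a detail the paper omits; it is a worthwhile addition.
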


\begin{proof} Since we have injections $\zeta(-\overline{z}-D) \hookrightarrow \zeta(-\overline{z}) \hookrightarrow E$ for any pair $(z, D)$, we obtain an inclusion $\cD_1 = \phi(Z \times X^{(g-2)}) \subset \Theta_E$. Moreover, if the pair $(z, D)$ is general and $\phi(z, D) = \phi(z', D')$, 
then $z = z'$ and $D = D'$. In fact, the subvariety of line bundles $N \in \Pic^g(X)$ with $h^0(N) \geq 2$ is of dimension $g-2$. Since
$\dim Z \times  X^{(g-2)} = g-1$, we obtain that $h^0(\Ox(\overline{z} + D)) =1$ for a general $(z,D) \in Z \times  X^{(g-2)}$.
% We consider instead a map $X \times \Sym^{g-2} X \to J$ given by $(x, D) \mapsto \overline{\pi(x)} + D$. This lands in $W_g$...
\end{proof}

For any integer $d$ the Norm map of the covering $\pi : X \to Z$ induces a morphism between the Picard varieties of 
degree $d$ line bundles 
$$\Nm : \Pic^d (X) \to \Pic^d(Z).$$
We recall the formula $N \otimes i^* N = \pi^* \Nm(N)$ for any line bundle $N \in \Pic(X)$.  We write $P(X/Z)$ for 
the Prym variety of the cover $\pi \colon X \to Z$, i.e.,
$$ P(X/Z) = \Ker \ \Nm = \Ker \left( 1 + i^* \right) = \{ \eta \in \Pic^0(X) : i^* \eta = \eta^{-1} \}. $$
Finally for any $\lambda \in \Pic(Z)$ the fiber $\Nm^{-1}(\lambda) \subset \Pic(X)$ is a translate of the Prym variety $P(X/Z)$.
Note that $\dim P(X/Z) = g-1$ and that $\Ker \ \Nm$ is connected since $\pi$ is ramified.

\begin{lemma} 
We put $\lambda = M \otimes \Nm(\zeta)^{-1} \in \Pic^{g-1}(Z)$ and $\cD_2 = \Nm^{-1}(\lambda) \subset J$.
Then $\cD_2$ is an irreducible component of the theta divisor $\Theta_E$. Moreover $\cD_2 \not= \cD_1$.
\end{lemma}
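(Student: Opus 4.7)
The plan is to show $\cD_2 \subseteq \Theta_E$; since $\cD_2$ is a translate of the connected Prym $P(X/Z)$, it is an irreducible subvariety of $J$ of dimension $g-1$, and $\Theta_E$ is a divisor of pure dimension $g-1$, so this inclusion forces $\cD_2$ to be an irreducible component. The structural identity driving the argument is that for $N \in \cD_2$ we have $\Nm(N\zeta) = \lambda \otimes \Nm(\zeta) = M$, which combined with $A \otimes i^*A = \pi^*\Nm(A)$ and $\pi^*M = \Kx L = \Kx \zeta^2$ gives
\[ i^*(N\zeta) \;=\; \Kx \zeta \otimes N^{-1}. \]

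To exhibit a nonzero section of $N \otimes E$, I would tensor the defining extension (\ref{extE}) by $N$ and examine the connecting map $\partial \colon H^0(N\zeta) \to H^1(N\zeta^{-1})$, which is cup product with $e$. Riemann--Roch gives $h^0(N\zeta) \geq 1$, and for generic $N \in \cD_2$ one has $h^0(N\zeta) = 1$ (the locus in $\Pic^g(X)$ where $h^0 \geq 2$ has dimension at most $g-2$, while $\cD_2$ has dimension $g-1$). Let $s$ span $H^0(N\zeta)$. By Serre duality, $H^1(N\zeta^{-1})^* \cong H^0(\Kx \zeta N^{-1}) = H^0(i^*(N\zeta))$, which is one-dimensional and spanned by $i^*s$. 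Thus $\partial(s)$ is detected by the single scalar $e(s \cdot i^*s)$, where $s \cdot i^*s \in H^0(\pi^*M)$. The decisive point is that $s \cdot i^*s$ is fixed by $i^*$, hence lies in the invariant subspace $\pi^*H^0(Z,M)$, which by construction is exactly $\ker e$. So $\partial(s) = 0$, $s$ lifts to a section of $N \otimes E$, and $N \in \Theta_E$; upper semicontinuity of $h^0(N \otimes E)$ then promotes the inclusion to all of $\cD_2$.

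For the inequality $\cD_2 \neq \cD_1$, observe that $\Nm$ is by definition constant equal to $\lambda$ on $\cD_2$, whereas
\[ \Nm\bigl(\Ox(\overline z + D) \otimes \zeta^{-1}\bigr) \;=\; \Ox_Z(2z) \otimes \Nm(\Ox(D)) \otimes \Nm(\zeta)^{-1} \]
varies as $D$ moves in $X^{(g-2)}$, since $\pi$ is surjective and $g - 2 \geq 1$. Hence $\cD_1 \not\subseteq \cD_2$, so the two components are distinct.

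The main obstacle I anticipate is setting up the Serre-duality reformulation of $\partial$ correctly and recognising that the hyperplane $e$ was tailor-made to vanish on $i^*$-invariant sections of $\Kx L$; once these ingredients are aligned, the symmetry of $s \cdot i^*s$ makes the vanishing of $\partial(s)$ essentially automatic, and the rest is dimension counting.
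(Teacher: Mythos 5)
Your proposal is correct and follows essentially the same route as the paper: both reduce membership of a general $N \in \cD_2$ in $\Theta_E$ to the vanishing of the coboundary $\cup\, e$ on $H^0(N\zeta)$, identify this via Serre duality with the condition that the image of the multiplication map $H^0(N\zeta)\otimes H^0(\Kx\zeta N^{-1})\to H^0(\pi^*M)$ lands in the $i$-invariant hyperplane $\ker e$, and deduce it from $i^*(N\zeta)\cong \Kx\zeta N^{-1}$ together with $h^0(N\zeta)=1$; both also distinguish $\cD_1$ from $\cD_2$ by noting that $\Nm$ is constant on $\cD_2$ but not on $\cD_1$. Your write-up merely makes explicit a few points the paper leaves implicit (the symmetry of $s\cdot i^*s$ and the semicontinuity step).
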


\begin{proof}
We first show that $\cD_2 \subset \Theta_E$. The two subvarieties of line bundles $\eta \in \Pic^{g-1}(X)$ satisfying 
$h^0(\zeta^{-1} \eta) > 0$ and $h^0(\zeta \eta ) > 1$ respectively are of dimension $g-2$, so  $h^0(\zeta^{-1} \eta) = 0$ and
$h^0(\zeta \eta ) = 1$ for a general line bundle $\eta \in \cD_2$. Next, we observe that $\eta \in \cD_2$ if and only
if 
$$ \eta \otimes i^* \eta = \pi^* \Nm(\eta) =  \pi^* M  \otimes \zeta^{-1} \otimes  i^* \zeta^{-1} = \Kx \otimes \zeta  \otimes i^* \zeta^{-1},$$
or equivalently
$$i^*(\zeta \eta) = \Kx \zeta \eta^{-1}.$$
We now tensor the exact sequence \eqref{extE} with $\eta$ and take the associated long exact sequence of
cohomology 
$$ 0 \lra H^0(\zeta^{-1} \eta) \lra H^0(E \eta) \lra H^0(\zeta \eta) \map{\cup e} H^1(\zeta^{-1} \eta) \lra \cdots $$
Under the generality assumption for $\eta$ we have $h^0(\zeta^{-1} \eta) = 0$ and
$h^0(\zeta \eta ) = 1$, so $h^0(E \eta) > 0$ if and only if the coboundary map $\cup e$ is zero, or equivalently,
if the image of the multiplication map 
$$ H^{0}(\zeta \eta) \otimes H^0(\Kx \zeta \eta^{-1}) \lra H^0 ( \Kx \zeta^2 ) = H^0(\pi^* M) $$
lies in the invariant part $H^0(\pi^* M)_+$ defining the extension class $e$. We see that this is the case if 
$i^*(\zeta \eta) = \Kx \zeta \eta^{-1}$ and $h^0(\zeta \eta) =1$. Hence for a general $\eta \in \cD_2$ we have $h^0(E \eta) >0$,
which implies that $\cD_2 \subset \Theta_E$.

\bigskip

It is clear that the restriction of the Norm map to $\cD_1$ is not constant, hence $\cD_2 \not= \cD_1$.
\end{proof}

In fact, there are no other components.  This is shown in the following 

\begin{proposition}
We have a decomposition into irreducible components
$$ \Theta_E  = \cD_1 + \cD_2.$$
\end{proposition}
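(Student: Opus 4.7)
Given the preceding lemmas, $\cD_1$ and $\cD_2$ are distinct irreducible components of $\Theta_E$, so $\cD_1 + \cD_2 \leq \Theta_E$ as effective divisors. My plan is to establish (i) $\Supp \Theta_E = \cD_1 \cup \cD_2$, excluding additional components, and (ii) each component appears with multiplicity one. Step (ii) can then be deduced by verifying $[\cD_1] + [\cD_2] = [\Theta_E] = 2[\Theta]$ as cohomology classes, since any effective divisor on an abelian variety algebraically equivalent to zero must vanish.

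For step (i), tensor the defining sequence $0 \to \zeta^{-1} \to E \to \zeta \to 0$ with $N \in J$ and take cohomology: $N \in \Theta_E$ if and only if either (A) $h^0(N\zeta^{-1}) > 0$, i.e., $N = \zeta(D)$ for some $D \in X^{(g-2)}$, or (B) the coboundary $\cup e \colon H^0(N\zeta) \to H^1(N\zeta^{-1})$ has a kernel. A subtle point is that case (A) describes a locus of dimension only $g-2$, which must be shown to lie inside $\cD_1 \cup \cD_2$.

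For case (A), the crucial observation is that the unique (up to scalar) anti-invariant section of $\pi^*M$ factors as $\pi^*\tau \cdot r$, where $\tau$ is the unique section of $M \otimes \eta^{-1}$ on $Z$ (with $\pi_*\Ox_X = \Ox_Z \oplus \eta^{-1}$ and $\pi^*\eta = \Kx$) and $r$ is the canonical section of $\Ox(R) = \Kx$; its divisor is $\bar u + R$, where $u = (\tau) \in Z$, so the corresponding section of $L = \pi^*M\Kx^{-1}$ has divisor the whole fibre $\bar u$. Multiplying by $s_D \in H^0(\Ox(D))$ realises the unique divisor in $|L(D)|$ as $\bar u + D$, whence $N = \zeta(D) = \zeta^{-1}(\bar u + D) \in \cD_1$. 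For case (B), Serre duality translates the condition into: the image of the multiplication map $m_N \colon H^0(N\zeta) \otimes H^0(\Kx N^{-1}\zeta) \to H^0(\pi^*M)$ lies in the invariant subspace $\pi^*H^0(M)$. For generic $N$ this forces $(s) + (t) = \pi^*F$ for some $F \in |M|$, and classifying splittings of $\pi^*F$ into effective divisors of degree $g$ by the number of whole fibres contained in $(s)$ gives a dichotomy: the purely split case $(t) = i^*(s)$ yields $\Nm(N\zeta) = M$, i.e., $N \in \cD_2$, while any splitting with a whole fibre $\bar z \subseteq (s)$ yields $N = \zeta^{-1}(\bar z + D') \in \cD_1$.

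For step (ii), $[\cD_2] = \Nm^*\theta_Z$ where $\theta_Z$ is the class of a point in $JZ$, while $[\cD_1] = [\pi^*Z] * [W_{g-2}]$ (Pontryagin product, since $\phi$ is birational onto its image). Using the standard formula $\Theta|_{\pi^*Z} \equiv 2 \theta_Z$ (degree two since $\deg \pi = 2$), Poincar\'e's formula $[W_{g-2}] = \theta^{2}/2!$, and the decomposition of $\Theta$ associated to the isogeny $J \sim \pi^*(JZ) + P(X/Z)$, one verifies $[\cD_1] + [\cD_2] = 2[\Theta]$. Combined with step (i), $\Theta_E - \cD_1 - \cD_2$ is then an effective divisor algebraically equivalent to zero, and therefore vanishes. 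The main obstacle is this last class computation, which requires care with Pontryagin products and the polarization structure of the Prym variety $P(X/Z)$; once in hand, the decomposition $\Theta_E = \cD_1 + \cD_2$ follows immediately.
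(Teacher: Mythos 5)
Your overall architecture is sound and, at the decisive point, coincides with the paper's: both arguments start from the observation that $\cD_1 + \cD_2 \leq \Theta_E$ as effective divisors (since the two preceding lemmas exhibit them as distinct irreducible components) and then kill the effective difference by a degree count. The paper does this by computing the two intersection numbers $\cD_1 . \Theta^{g-1} = 2(g-1)(g-1)!$ (an explicit calculation in $H^*(X \times X^{g-2},\C)$ using Macdonald's formulas and the birationality of $\phi$) and $\cD_2 . \Theta^{g-1} = 2(g-1)!$ (from the type $(1,\ldots,1,2)$ of the induced polarization on the Prym variety), and checking that the sum is $\Theta_E . \Theta^{g-1} = 2g!$. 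Your step (ii) is the same idea executed differently: identifying $[\cD_1]$ with the Pontryagin product $[\pi^* Z] * [W_{g-2}]$ and $[\cD_2]$ with $\Nm^*$ of a point, and aiming at the full class identity $[\cD_1]+[\cD_2]=2[\Theta]$. This is a legitimate and arguably cleaner route, but note two things. First, the full class identity is more than you need: an effective divisor with vanishing intersection against $\Theta^{g-1}$ is already zero, so the paper's weaker numerical check suffices, and you would still need the same Prym input (the restriction of $\Theta$ to $P(X/Z)$ has type $(1,\ldots,1,2)$) to pin down the $\cD_2$ contribution. Second, and more importantly, the Pontryagin-product evaluation of $[\pi^*Z]*\frac{\theta^2}{2}$ against the decomposition of $\Theta$ under the isogeny $\pi^*JZ \times P(X/Z) \to J$ is precisely where all the work lives, and your proposal defers it to ``one verifies''; as written this is an unexecuted plan rather than a proof of the key step, even though the plan would succeed. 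Your step (i), the set-theoretic identification $\Supp\,\Theta_E = \cD_1 \cup \cD_2$ via the dichotomy on splittings of $\pi^*F$ into $(s)+(t)$ (whole fibre in $(s)$ versus $(t)=i^*(s)$), is correct and rather elegant --- including the observation that the locus $h^0(N\zeta^{-1})>0$ lands in $\cD_1$ because $L=\Ox(\overline{u})$ for the zero $u$ of the anti-invariant section --- but it is logically redundant: once the class (or degree) identity is known, effectivity of $\Theta_E - \cD_1 - \cD_2$ already follows from the two lemmas, with no need to control the support separately. The paper accordingly omits any such support analysis.
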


\begin{proof}
Let $\Theta \subset J$  denote the Riemann theta divisor. In order to show the equality it will be enough to show the 
equality of intersection numbers 

\begin{equation} \label{intnumb}
\cD_1 . \Theta^{g-1} + \cD_2 . \Theta^{g-1} = \Theta_E . \Theta^{g-1} = 2 \Theta^{g} = 2 g!.
\end{equation}

First we compute $\cD_1 . \Theta^{g-1}$. Let $\mathcal{L} = \phi^*(\mathcal{O}_J(\Theta))$ denote the pull-back of the line bundle
$\mathcal{O}_J(\Theta)$ to $Z \times X^{(g-2)}$. Since $\phi$ is birational by Lemma \ref{phibir}, we have $\cD_1 . \Theta^{g-1} =
 \mathcal{L}^{g-1}$.
We will use the following commutative diagram
$$ \xymatrix{ X \times X^{g-2} \ar[rr]^-{(\Id \times i) \times \Id} \ar[d]^{\pi \times q} & & (X \times X) \times X^{g-2} = X^g \ar[d]^{\alpha} \\
 Z \times X^{(g-2)} \ar[rr]^-{\phi} & & J } $$
where $\alpha$ is given by $\alpha((x_i)) = \Ox(\sum_{i=1}^g x_i) \otimes \zeta^{-1}$.  Hence, since $\deg (\pi \times q) = 2 (g-2) !$, we obtain
$$ \mathcal{L}^{g-1} = \frac{1}{2 (g-2) !} \left[ (\pi \times q)^* \mathcal{L}  \right]^{g-1} = 
\frac{1}{2 (g-2) !} \left[ (\Id \times i)^* \circ \alpha^*  \mathcal{O}_J(\Theta)   \right]^{g-1}.$$
We will compute the latter intersection number in the cohomology ring $H^*(X \times X^{g-2}, \C)$. A straightforward
computation leads to
\begin{equation} \label{pullbackTheta}
\alpha^* \mathcal{O}_J(\Theta) = \Kx \zeta \boxtimes \Kx \zeta \boxtimes \cdots \boxtimes \Kx \zeta (- \sum_{1 \leq i< j \leq g} \Delta_{ij} ) 
\end{equation}
where $\Delta_{ij} \subset X^g$ is the diagonal on the $i$-th and $j$-th component in $X^g$.

We need to recall some results from \cite{MD}. We denote by $\beta$ the generator of $H^2(X, \zz) \cong \zz$ induced by the orientation
of $X$ and we choose generators $\alpha_1, \ldots, \alpha_{2g}$ of $H^1(X, \zz) \cong \zz^{2g}$ such that 
$\alpha_j \alpha_k = 0$ unless $j-k = \pm g$, $\alpha_j \alpha_{g+j} = - \alpha_{g+j} \alpha_j = \beta$ for $1 \leq j \leq g$, and 
such that the involution $i$ acts as $i(\alpha_1) = \alpha_1$, $i(\alpha_{g+1}) = \alpha_{g+1}$ and $i(\alpha_j) = - \alpha_j$,
$i(\alpha_{g+j}) = - \alpha_{g+j}$ for $2 \leq j \leq g$. We also introduce for $1 \leq k \leq g-2$
$$ \alpha_{j,k} = 1 \otimes \cdots \otimes 1 \otimes \alpha_j \otimes 1 \otimes  \cdots \otimes 1  \qquad
\text{and} \qquad \beta_k = 1 \otimes \cdots \otimes 1 \otimes \beta \otimes 1 \otimes \cdots \otimes 1, $$
the $\alpha_j$ and $\beta$ being in the $k$-th place, as well as for $1 \leq j \leq 2g$
$$ \xi_j = \alpha_{j,1} + \cdots + \alpha_{j,g-2} \in H^1(X^{g-2}, \C) \qquad \text{and} \qquad \eta = \beta_1 + \cdots + \beta_{g-2}
\in H^2(X^{g-2}, \C).$$
We also put $\sigma_i = \xi_i \xi_{g+i}$ for $1 \leq i \leq g$ and we recall that we have (see \cite{MD}) the following
relations $\sigma_i^2 = 0$, $\sigma_i \sigma_j = \eta^2$ for $i \not= j$, $\sigma_i \eta = \eta^2$, and $\xi_i \sigma_j = \sigma_j
\xi_i$ for any $i,j$.

\bigskip

We introduce the ``diagonal" divisors in $X^{g-2}$ and in $X \times X^{g-2}$ with their reduced structures
$$  \Delta = \{ (x_1, \ldots , x_{g-2}) \in X^{g-2} \ | \ x_j = x_k \ \text{for some} \ 1 \leq j,k \leq g-2 \}, $$
$$  \Delta'_k = \{ (u; x_1, \ldots , x_{g-2}) \in X \times X^{g-2} \ | \ u =  x_k  \} \ \text{and} \ \Delta' = \sum_{k=1}^{g-2} \Delta'_k.$$
We denote by $p$ and $q$ the projection of $X \times X^{g-2}$ onto the first and second factor respectively. Then it follows
from \eqref{pullbackTheta} that 
$$(\Id \times i)^* \circ \alpha^*  \mathcal{O}_J(\Theta) = p^* \left( K_X \zeta \otimes i^* \zeta   \right) 
\otimes q^* \left( (K_X \zeta)^{\boxtimes {g-2}} (-\Delta)   \right) (-\Delta' - i(\Delta')).$$
We now compute the class $c$ of this line bundle in $H^*(X \times X^{g-2}, \C) = H^*(X, \C) \otimes H^*(X^{g-2}, \C)$. By \cite{MD} formula (15.4)
the class $[D] \in H^*(X, \C) \otimes H^*(X, \C)$ of the diagonal $D \subset X \times X$ equals
\begin{eqnarray*}
[D] & = & (g+1) (\beta \otimes 1 + 1 \otimes \beta) - \sum_{j=1}^g (\alpha_j \otimes 1 + 1 \otimes \alpha_j) 
(\alpha_{g+j} \otimes 1 + 1 \otimes \alpha_{g+j})\\
 & = & (\beta \otimes 1 + 1 \otimes \beta)  - \sum_{j=1}^g (\alpha_{g+j} \otimes  \alpha_j + \alpha_j \otimes \alpha_{g+j}).
\end{eqnarray*}
Let $\pi_k  : X^{g-2} \rightarrow X$ denote projection onto the $k$-th factor. Then $\Delta'_k = (\mathrm{Id} \times \pi_k)^{-1}(D)$
and therefore its class $[\Delta'_k] \in H^*(X,\C) \otimes H^*(X^{g-2}, \C)$ equals 
$$ [\Delta'_k] = \beta \otimes 1 + 1 \otimes \beta_k - \sum_{j=1}^g (\alpha_{g+j}  \otimes \alpha_{j,k} + \alpha_j \otimes \alpha_{g+j,k} ) $$
Summing over $k = 1, \ldots , g-2$ leads to 
$$[\Delta'] = (g-2) \beta \otimes 1 + 1 \otimes \eta - \sum_{j=1}^g (\alpha_{g+j}  \otimes \xi_j + \alpha_j \otimes \xi_{g+j} ) $$
and, applying the involution $i$ on the first factor $X$
$$[i(\Delta')] = (g-2) \beta \otimes 1 + 1 \otimes \eta - \alpha_{g+1}  \otimes \xi_1 - \alpha_1 \otimes \xi_{g+1} + \sum_{j=2}^g (\alpha_{g+j}  \otimes \xi_j + \alpha_j \otimes \xi_{g+j} ). $$
Moreover, again by \cite{MD} formula (15.4), we have
$$ [\Delta] =  (2g -3) \eta  - \sum_{j=1}^{g-2} \sigma_j \in H^*(X^{g-2}, \C),$$
$[K_X \zeta \otimes i^* \zeta ] = (2g) \beta \in H^2(X, \C)$, and  $[(K_X \zeta)^{\boxtimes {g-2}}] = (2g-1) \eta 
\in H^2(X^{g-2}, \C)$. Hence, using  the preceding equalities, we can 
compute the class
$$ c =  4 (\beta \otimes 1) + 2 ( \alpha_{g+1}  \otimes \xi_1 +  \alpha_1 \otimes \xi_{g+1} ) + \sum_{j=1}^g 1 \otimes \sigma_j. $$ 
We put $a = 4 (\beta \otimes 1) + 2 ( \alpha_{g+1}  \otimes \xi_1 +  \alpha_1 \otimes \xi_{g+1} )$ and 
$b=  \sum_{j=1}^g 1 \otimes \sigma_j$ and we note that $ab = ba$. Moreover, for dimensional reasons, $b^{g-1} = 0$ and $a^n = 0$ for
$n \geq 3$. Hence $c^{g-1} = (a+b)^{g-1} = (g-1) ab^{g-2} + \frac{(g-1)(g-2)}{2} a^2 b^{g-3}$. Using the above relations satisfied by the
$\sigma_i$, we easily compute that $ab^{g-2} = 4 \beta \otimes \left(  \sum_{j=1}^g  \sigma_j \right)^{g-2} = 2 g! \beta \otimes
\eta^{g-2}$, $a^2 = -8 \beta \otimes \sigma_1$ and $a^2 b^{g-3} = -4 (g-1) ! \beta \otimes \eta^{g-2}$. Using the fact that
$\beta \otimes \eta^{g-2} = (g-2) !$ under the canonical isomorphism $H^{2g-2}(X^{g-1}, \zz) = \zz$, we obtain that $c^{g-1} =
4 \left( (g-1)! \right)^2$. Hence $\cD_1 . \Theta^{g-1} = \frac{c^{g-1}}{2 (g-2) !} = 2 (g-1) (g-1)!$.

\bigskip

We now compute $\cD_2 . \Theta^{g-1}$. Let $L$
denote the restriction of the line bundle $\mathcal{O}_J(\Theta)$ to $\cD_2$. We recall that $\cD_2$ is a translate
of the Prym variety $P(Y/X)$. Then by 
\cite{BL} Corollary 12.1.5 the type of the polarization given by $L$ is $(1,1, \ldots, 1,2)$, hence
$h^0(\cD_2, L) = 2 = \frac{L^{g-1}}{(g-1)!}$ by the Riemann-Roch theorem applied to the Prym variety $P(X/Z) \cong \cD_2$. Therefore 
$\cD_2 . \Theta^{g-1} = L^{g-1} = 2 (g-1)!$.

We then conclude because we obtain equality \eqref{intnumb} by summing both intersection numbers. 
\end{proof}


\begin{thebibliography}{99}

%\bibitem {BPL} Brambila-Paz, L.; Lange, H.: \textsl{A stratification of the moduli space of vector bundles on curves}, J.\ Reine Angew.\ Math.\ \textbf{494} (1998), 173--187.

\bibitem {BR} Ballico, E.; Russo, B.: \textsl{Families of maximal subbundles of stable vector bundles on curves},
Rocky Mt.\ J.\ Math.\ \textbf{31} (4) (2001), 1141--1150.

\bibitem {B1} Beauville, A.: \textsl{Vector bundles on curves and generalized theta functions:
recent results and open problems}. ``Current topics in complex algebraic geometry'', 17-–33, Math.\ Sci.\ Res.\ Inst.\ Publ.\ \textbf{28}, Cambridge Univ.\ Press (1995).

\bibitem {B} Beauville, A.: \textsl{Some stable vector bundles with reducible theta divisor},
Manuscripta Math.\ \textbf{110} (3) (2003), 343--349.

\bibitem {B2} Beauville, A.: \textsl{Vector bundles and theta functions on curves of genus 2 and 3}, Amer.\ J.\ of Math., \textbf{128} (3) (2006), 607--618.

%\bibitem {BNR} Beauville, A.; Narasimhan, M.\ S.; Ramanan, S.: \textsl{Spectral curves and the generalized theta divisor}, J.\ Reine Angew.\ Math.\ \textbf{398} (1989), 169--179.

\bibitem {BL} Birkenhake, C.; Lange, H.: \textsl{Complex Abelian varieties}, Grundlehren der mathematischen Wissenschaften, Vol.\ 302 (2003), Springer.

\bibitem {CH1} Choe, I.; Hitching, G.\ H.: \textsl{Secant varieties and Hirschowitz bound on vector bundles over a curve}, Manuscripta Math.\ \textbf{133} (3-4) (2010), 465--477.

%\bibitem {CH2} Choe, I.; Hitching, G.\ H.: \textsl{Lagrangian subbundles of symplectic vector bundles over a curve}, Math.\ Proc.\ Camb.\ Phil.\ Soc.\ doi:10.1017/S0305004112000096

\bibitem {CH3} Choe, I.; Hitching, G.\ H.: \textsl{A stratification on the moduli spaces of symplectic and orthogonal vector bundles over a curve}, arXiv:1204.0834, submitted.

%\bibitem {GN} Gurjar, S.; Nitsure, N.: \textsl{Schematic HN stratification for families of principal bundles and lambda modules}, arXiv:1208.5572.

%\bibitem {Hir} Hirschowitz, A.: \textsl{Probl\`emes de Brill--Noether en rang superieur}, Pr\'epublications Math\'ematiques n.\ 91, Nice (1986).

%\bibitem {Hit0} Hitching, G.\ H.: \textsl{Moduli of symplectic bundles over curves}, doctoral dissertation, University of Durham (2005).

\bibitem {H1} Hitching, G.\ H.: \textsl{Subbundles of symplectic and orthogonal vector bundles over curves}, Math.\ Nachr.\ \textbf{280}, no.\ 13--14 (2007), 1510--1517.

%\bibitem {H2} Hitching, G.\ H.: \textsl{Moduli of rank 4 symplectic bundles over a curve of genus 2}, J.\ London Math.\ Soc.\ \textbf{75} (1) (2007), 255--272.

\bibitem {H3} Hitching, G.\ H.: \textsl{Rank four symplectic bundles without theta divisors over a curve of genus two}, Internat.\ J.\ Math.\ \textbf{19} (2008), no.\ 4, 387--420.

%\bibitem {HN} Holla, Y.\ I.; Narasimhan, M.\ S.: \textsl{A generalisation of Nagata's theorem on ruled surfaces}, Comp.\ Math.\ \textbf{127} (2001), 321--332.

%\bibitem {Hol} Holla, Y.\ I.: \textsl{Counting maximal subbundles via Gromov--Witten invariants}, Math.\ Ann.\ \textbf{328}, no.\ 1--2 (2004), 121--133.

\bibitem {HR} Hwang, J.-M.; Ramanan, S.: \textsl{Hecke curves and Hitchin discriminant}, Ann.\ Sci.\ \'Ec.\ Norm.\ Sup\'er.\ (4) \textbf{37}, no.\ 5, 801--817 (2004).

%\bibitem {Kem} Kempf, G.\ R.: \textsl{Abelian Integrals}, Monograf\'{\i}as del Instituto de Matem\'aticas 13, Universidad Nacional Aut\'onoma de M\'exico, Mexico (1983).

%\bibitem {Lan}  Lange, H.; \textsl{Zur Klassifikation von Regelmannigfaltigkeiten}, Math. \ Ann.\ \textbf{262}, no. \ 4 (1983), 447--459.

%\bibitem {Lan2} Lange, H.; \textsl{Universal families of extensions}, J.\ Algebra \textbf{83} (1983), 101--112.

\bibitem {LNa} Lange, H.; Narasimhan, M.\ S.: \textsl{Maximal subbundles of rank two vector bundles on curves}, Math.\ Ann.\ \textbf{266}, no.\ 1 (1983), 55--72.

%\bibitem {LNe} Lange, H.; Newstead, P.\ E.: \textsl{Maximal subbundles and Gromov--Witten invariants}, A tribute to C.\ S.\ Seshadri (Chennai, 2002), 310--322, Trends Math., Birkh\"auser, Basel, 2003.

\bibitem {Las} Laszlo, Y.: \textsl{Un th\'eor\`eme de Riemann pour les diviseurs th\^eta sur les espaces de modules de fibr\'es stables sur une courbe}, Duke Math.\ J.\ \textbf{64}, no.\ 2, 333--347 (1991).

\bibitem {MD} Macdonald, I.G.: \textsl{Symmetric products of an algebraic curve}, Topology 1 (1962) 319--343

%\bibitem {RP} Pauly, C.: \textsl{Raynaud's example}, unpublished manuscript.

%\bibitem {P} Popa, M.: \textsl{Dimension estimates for Hilbert schemes and effective base point freeness on moduli spaces of vector bundles on curves}, Duke Math J.\ \textbf{107} (2001), no.\ 3, 469--495.

%\bibitem {LeP} Le Potier, J.: \textsl{Lectures on vector bundles} (translated by A.\ Maciocia). Cambridge Studies in Advanced Mathematics, 54. Cambridge University Press, Cambridge, 1997.

%\bibitem {NR1975} Narasimhan, M.\ S.; Ramanan, S.: \textsl{Deformations of the moduli space of vector bundles over an algebraic curve}, Ann.\ Math.\ (2) \textbf{101} (1975), 391--417.

%\bibitem {Ram} Ramanan, S.: \textsl{Orthogonal and spin bundles over hyperelliptic curves}, Proc. Indian acad. Sci. (Math. Sci.) \textbf{90}, no.\ 2 (1981), 151-166.

%\bibitem {Rth} Ramanathan, A.: \textsl{Stable principal bundles on a compact Riemann surface}. Math.\ Ann.\ \textbf{213} (1975), 129--152.

\bibitem {Ray} Raynaud, M.: \textsl{Sections des fibr\'es vectoriels sur une courbe}. Bull.\ Soc.\ Math.\ France \textbf{110} (1982), no.\ 1, pp.\ 103--125.

\bibitem {RT} Russo, B.; Teixidor i Bigas, M.: \textsl{On a conjecture of Lange}, J.\ Algebraic Geometry \textbf{8} (1999), 483--496.

%\bibitem {Ser} Serman, O.: \textsl{Moduli spaces of orthogonal and symplectic bundles over an algebraic curve}, Compositio Math.  \textbf{144} (2008), 3721--733.

\end{thebibliography}
\end{document}